\newtheorem{theorem}{Theorem}[section]
\newtheorem{lemma}[theorem]{Lemma}
\newtheorem{proposition}[theorem]{Proposition}
\newtheorem{corollary}[theorem]{Corollary}
\newtheorem{example}[theorem]{Example}
\numberwithin{equation}{section}
\newcommand{\ring}{S}
\newcommand{\coef}{\boldsymbol{\ring}}
\newcommand{\Ffield}{\mathbb{F}}
\newcommand{\field}{\mathbb{K}}
\newcommand{\fl}{\boldsymbol{Q}}
\newcommand{\fh}{\mathfrak h}
\newcommand{\J}{\mathcal J}
\newcommand{\Jg}{{\mathcal J}^g}
\newcommand{\N}{\mathbb N}
\newcommand{\s}{\mathfrak s}
\newcommand{\supp}{\mathop{\mathrm{supp}}}
\newcommand{\Z}{\mathbb Z}
\newcommand{\abf}{\mathbf{a}}
\DeclareMathOperator{\alg}{\mathrm{alg}}
\DeclareMathOperator{\Endo}{\mathrm{End}}
\DeclareMathOperator{\Lie}{\mathcal{L}}
\DeclareMathOperator{\St}{\mathrm{St}}
\font\cyrillic=wncyi10
\newcommand{\SU}{\mathop{\hbox{{\cyrillic UX}}}}
\author{J. M. P\'erez-Izquierdo}
\address{Departamento de Matem\'aticas y Computaci\'on, Universidad de
La Rioja, 26004 \\ Lo\-gro\-\~no, Spain}
\email{jm.perez@unirioja.es}
\dedicatory{Dedicated with admiration, respect and affection to Ivan Shestakov \\ on the occasion of his 70th birthday \\Thanks for making your home our mathematical home}
\keywords{Loops, Sabinin algebras, Nottingham group}
\subjclass[2010]{20N05,17D99,17B66}
\begin{document}
\title[Formal power series with noncommutative coefficients]{The loop of formal power series with noncommutative coefficients under substitution}
\begin{abstract}
The set of formal power series with coefficients in an associative but noncommutative  algebra becomes a loop with the substitution product. We initiate the study of this loop by describing certain Lie and Sabinin algebras related to it. Some examples of Lie algebras satisfying the standard identities of degrees $5$ and $6$ appear naturally.
\end{abstract}
\maketitle

%
\section{Introduction}

Loops are the nonassociative counterpart of groups. A loop $(Q, xy, e)$ is a set $Q$ with a binary product $xy$ and a unit element $e \in Q$, i.e. $ex = x = xe$ for all $x \in Q$, such that the left and right multiplication operators $L_x \colon y \mapsto xy$, $R_x \colon y \mapsto yx$ by $x$ are invertible for all $x \in Q$. The left and right divisions on $Q$ defined by $ x \backslash y := L^{-1}_x(y)$ and $x/y := R^{-1}_y(x)$ satisfy
\begin{equation}\label{eq:loop}
	x\backslash (xy) = y = x(x\backslash y), \quad  (xy)/y = x = (x/y)y \quad \text{and} \quad x\backslash x = y/  y
\end{equation}
and loops can be introduced as those algebraic structures $(Q,xy, x\backslash y, x/y)$ for which (\ref{eq:loop}) holds. In case that $Q$ is a manifold topological, differentiable or analytical and the maps $xy, x/y, x\backslash y$ have the corresponding properties of regularity then the loop is called topological, differentiable or analytical. If the product is only defined on a neighborhood $U$ of $e$, i.e. $U \times U \rightarrow Q$, then we have a local loop. 

Groups are associative loops. Local Lie groups are classified in terms of their Lie tangent algebras, and local analytic loops are classified in terms of their Sabinin tangent algebras. Many authors contributed to pave the way to this result. Malcev in \cite{Ma55} realized that Lie's fundamental theorems could be extended to some varieties of loops. In the next few years Hofmann systematically studied topological loops. In the sixties Kikkawa \cite{Ki64} showed how  local geodesic sums in affine manifolds define local loops. In the early seventies Kuzmin \cite{Ku71} was able to integrate Malcev algebras to obtain local analytic Moufang loops, i.e. local analytic loops that satisfy any of the Moufang identities
\begin{align*}
	x(y(xz)) &= ((xy)x)z, & (xy)(zx) &= (x(yz))x,\\
	(xy)(zx) &= x((yz)x), & ((xy)z)y &= x(y(zy)),
\end{align*}
a major contribution later extended to global Moufang loops by Kerdman \cite{Ke79}. The work on local analytic Moufang loops was also extended to local analytic Bol loops, i.e. local analytic loops that satisfy
\begin{displaymath}
	((xy)z)y = x((yz)y),
\end{displaymath} 
by Mikheev and Sabinin in \cite{MiSa82}. Then, in \cite{HoSt86} Hofmann and Strambach made a fundamental contribution. They proved that the tangent algebra of any local analytic loop is an Akivis algebra and that these algebras can be integrated to obtain local analytic loops. Unfortunately, the same Akivis algebra might correspond to nonisomorphic local analytic loops, so a finer algebraic structure was required for classification. This new algebraic structure, called Sabinin algebra, came from Mikheev and Sabinin in \cite{MiSa87}. Thus, Sabinin algebras are the nonassociative counterpart of Lie algebras. Many other important names should have been mentioned, so we refer to \cites{Pf90,HoSt90,CPS90,Sa99,Sa00,NaSt02,AkGo06} for a historical account  of the different schools that contributed to the impressive advances of the subject during these decades. On the other side, the advances in finite loops during these years have been even more spectacular. 

In the approach of Mikheev and Sabinin to the study of local loops, right monoalternative loops play a central role. These are loops that satisfy the identity
\begin{displaymath}
(xy)y = x(yy).
\end{displaymath}
It was proved that any local analytic loop is the perturbation of an associated right monoalternative one. Thus Mikheev and Sabinin first described the tangent algebras of right monoalternative loops and then they included the extra operations required to codify the perturbation. 

A local loop $(Q,xy,e)$ determines the parallel transport of an affine connection with zero curvature by \[\tau^x_y(\xi_x) := dL_y\vert_e (dL_x\vert_e)^{-1}(\xi_x)\] where $df\vert_e$ denotes the differential of $f$ at $e$. Conversely, such an affine connection determines on $Q$ a right monoalternative local loop structure by \[x \times y := \exp_x \tau^e_x \exp^{-1}_e(y).\] Thus $xy = x \times \Phi_x(y)$ for some $\Phi_x$ that can be thought as the map required to perturb the monoalternative product $x \times y$ to recover the original product $xy$.

The tangent space $T_e Q$ of $Q$ at $e$ inherits many multilinear operations, the Sabinin brackets on $T_e Q$, from the torsion tensor by
\begin{displaymath}
\langle \xi_1,\dots, \xi_n; \zeta,\eta \rangle := \nabla_{\xi_1^*}\cdots \nabla_{\xi_n^*}\vert_e T(\zeta^*,\eta^*)
\end{displaymath}
where $\zeta^*$ indicates the adapted vector field associated to the vector $\zeta \in T_eQ$ and $\nabla$ denotes the covariant derivative. The relations among these operations are governed by Bianchi identities. The axioms for the Sabinin brackets that these identities provide are:
\begin{gather*}
	\langle x_1,\dots, x_m;y,z \rangle = - \langle x_1,\dots, x_m; z,y \rangle \\ \displaybreak[0] 
	\langle x_1,\dots, x_r,a,b,x_{r+1} \dots,x_m; y,z \rangle - \langle x_1,\dots, x_r,b,a,x_{r+1},\dots, x_m;y,z \rangle \\ 
	+\sum_{k=0}^r\sum_\alpha \langle x_{\alpha_1},\dots, x_{\alpha_k},\langle x_{\alpha_{k+1}},\dots, x_{\alpha_r};a,b\rangle, \dots, x_m; y,z \rangle = 0\\
	\sigma_{x,y,z}\left(\langle x_1,\dots, x_r,x;y,z\rangle + \sum_{k=0}^r \sum_{\alpha} \langle x_{\alpha_1},\dots, x_{\alpha_k};\langle x_{\alpha_{k+1}},\dots, x_{\alpha_r};y,z\rangle, x \rangle \right) = 0
\end{gather*}
where $m \geq 0$, $\alpha$ runs the set of all $(k,r-k)$ shuffles, i.e.  bijections of the type $\alpha \colon \{1,2,\dots,r\} \rightarrow \{1,2,\dots,r\}$, $i \mapsto \alpha_i$, $\alpha_1 < \alpha_2 <\cdots \alpha_k$, $\alpha_{k+1} < \cdots < \alpha_r$, $k=0,1,\dots,r$, $r\geq0$,  and $\sigma_{x,y,z}$ denotes the cyclic sum on $x,y,z$. The perturbation $\Phi_x$ is recovered by another family of multilinear operations $\Phi(x_1,\dots,x_m; y_1,\dots, y_n)$ ($m\geq 1$, $n\geq 2$), the multioperator, subject to the following axiom
\begin{displaymath}
	\Phi(x_1,\dots,x_m; y_1,\dots, y_n) = \Phi(x_{\tau(1)},\dots, x_{\tau(m)}; y_{\delta(1)},\dots, y_{\delta(n)})
\end{displaymath}
for any $\tau \in S_m, \delta \in S_n$, where $S_l$ is the symmetric group on $l$ symbols. A Sabinin algebra $(\s,\langle - ; - , - \rangle, \Phi(-;-))$ is a vector space equipped with Sabinin brackets and a multioperator.  Mikheev and Sabinin proved that two local loops are isomorphic if and only if their Sabinin tangent algebras are isomorphic.

In this paper we focus on the computation of the Sabinin brackets for the loop of formal power series with noncommutative coefficients under substitution. For a unital commutative ring $R$, the set $\J(R) \subseteq R[[t]]$ of formal power series $\sum_{i \geq 0} \alpha_i t^{i+1}$ with coefficients in $R$ and $\alpha_ 0 = 1$ is a group with the substitution product
\begin{equation}\label{eq:Nottingham}
	\left(\sum_{i \geq 0 } \alpha_i t^{i+1} \right) \circ \left(\sum_{j \geq 0 } \beta_j t^{j+1} \right) := \sum_{i \geq 0} \alpha_i \left(\sum_{j \geq 0} \beta_j t^{j+1}\right)^{i+1}.
\end{equation}	
This group was introduced by Jennings in \cite{Je54}. Later, Johnson \cite{Jo88}  and York \cite{Yo90} called the attention of group theorists leading to an intensive study of $\J(\Ffield_q)$ for finite fields $\Ffield_q$ (here $q=p^e$ for some prime $p$), i.e. the Nottingham groups. Looking at $\J(\Ffield_p)$ as a subgroup of the group of automorphisms of $\Ffield_p((t))$, Leedham-Green and Weiss proved that $\J(\Ffield_p)$ contains a copy of every finite $p$-group. Through a detailed study of Witt's result on Galois extensions used by Leedham-Green and Weiss, and by means of a result from Lubotzky and Wilson that ensures that there exists a $2$-generated pro-p group in which all countably based pro-p groups can be embedded, in \cite{Ca97} Camina proved that every countably based pro-p group can be embedded as a closed subgroup in $\J(\Ffield_p)$, a property that shares with the Grigorchuk group. Surprisingly enough, no explicit elements of order $p^2$ or embeddings of $\Z_p \times \Z_p$ have been obtained until recently \cite{BBK16}. We refer to the surveys \cites{Ca00,Ba13} for further information.

When the algebra of coefficients $R$ is no longer commutative then (\ref{eq:Nottingham}) might not be associative. Thus, instead of a group we get a loop, and our aim is to describe certain Sabinin brackets associated to it.  We will also present some results on normal subloops of $\J(R)$ in the spirit of \cite{Kl00}. Also,  as a consequence, we will obtain new examples of Sabinin algebras. Loops $\J(R)$ have appeared  in connection with Hopf algebras related to the renormalization procedure in quantum field theory \cite{BrFrKr06}, so a better understanding of them is desirable.

While related, the Sabinin brackets we will compute arise from a filtration of $\J(R)$ similar to the commutator-associator filtration introduced by Mostovoy in \cites{Mo06,Mo08} rather than from the geometric context of affine connections. Starting with the commutator 
\begin{displaymath}
	[a,b] = (b \circ a) \backslash (a \circ b)
\end{displaymath}
and the associator 
\begin{displaymath}
	(a,b,c) = (a \circ (b \circ c)) \backslash ((a \circ b) \circ c)
\end{displaymath} 
Mostovoy recursively  introduced the deviations of the associator by
\begin{displaymath}
	(a_1,\dots, a_{n+3})_{i_1,\dots, i_n} := A(a_{i_n}) \circ A(a_{i_n +1}) \backslash A(a_{i_n} \circ a_{i_n +1})
\end{displaymath}
where $A(a) := (a_1, \dots, a_{i_{n}-1},a,a_{i_n + 2},\dots, a_{n+3})_{i_1,\dots, i_{n-1}}$ and $1 \leq i_n \leq n+2$. 

A bracket of weight $n$ is an expression in $n$ indeterminates formed by repeatedly applying commutators, associators and deviations, and in which every indeterminate appears only once. A filtration of a loop $Q$ by normal subloops $Q = Q_1 \supseteq Q_2 \supseteq \cdots $ is said to be an N-sequence if for any bracket  $P(a_1,\dots, a_n)$ of weight $n$  and any $i_1,\dots, i_n$ we have $P(Q_{i_1},\dots, Q_{i_n}) \subseteq Q_{i_1+\cdots + i_n}$. Since $[Q_i,Q_i], (Q_i,Q_i,Q_i) \subseteq Q_{i+1}$, the quotients $\s_i:=Q_i/Q_{i+1}$ are abelian groups.  Thus, any bracket of weight $n$ induces a homomorphism of abelian groups $\s_{i_1} \otimes_{\Z} \cdots \otimes_{\Z} \s_{i_n}\to \s_{i_1+\cdots + i_n}$ and it defines an $n$-ary operation $p\colon \s \otimes_\Z \cdots \otimes_\Z \s \to \s$ where $\s := \oplus_{i\geq 1} \s_i$. 

Natural N-sequences for a loop are the commutator-associator and the dimension filtrations \cites{Mo06,MoPe07} among others. Starting with $\gamma_1 Q := Q$, the $n$-th term $\gamma_n Q$ of the commutator-associator filtration of $Q$ is the minimal normal subloop containing $[\gamma_i Q, \gamma_j Q]$ ($i+j \geq n$), $(\gamma_i Q, \gamma_j Q, \gamma_k Q)$ ($i+j+k \geq n$) and $(\gamma_{p_1}Q,\dots, \gamma_{p_{l+3}}Q)_{i_1,\dots,i_l}$  ($p_1 + \cdots + p_{l+3} \geq n$). On the graded vector space   $\s:=\bigoplus_{i\geq 1}\field \otimes_{\Z}\gamma_i Q/\gamma_{i+1}Q$, the operations $p_{n,m}$ defined by the deviations
\begin{displaymath}
	P_{n,m}(x_1,\dots,x_n; y_1, \dots, y_m;z) = (x_1,\dots, x_n, y_1, \dots, y_m,z)_{\underbrace{1,\dots, 1}_{n-1},\underbrace{n+1,\dots,n+1}_{m-1}}
\end{displaymath}
lead to $\field$-multilinear operations
\begin{equation}\label{eq:J_brackets}
	\langle x_1,\dots, x_n; y,z \rangle := p_{n,1}(x_1,\dots, x_n; z;y) - p_{n,1}(x_1,\dots, x_n; y;z).
\end{equation}
In case that the characteristic of $\field$ is zero then $\s$  endowed with these operations,
and with a multioperator $\Phi$ defined from $p_{n,m}$, is  a Sabinin algebra \cite{Mo08}. Other N-sequences might lead to Sabinin algebras in a similar way. In this paper we will compute the Sabinin brackets associated to the natural filtration $\J_1(R) \supseteq \J_2(R) \supset \cdots$ where $\J_n(R):=\{ t + \sum_{i\geq n}\alpha_i t^{i+1} \mid \alpha_i \in R\}$.

In \cite{ShU02} Shestakov and Umirbaev brought Sabinin algebras to an algebraic ground. In the same way as Lie algebras appear inside associative algebras with the commutator product $[x,y]:= xy - yx$  (Poincar\'e-Birkhoff-Witt Theorem), Shestakov and Umirbaev defined Sabinin brackets and a multioperator out of the product of any nonassociative algebra, giving rise to a functor $\SU$ from the category of nonassociative algebras to the category of Sabinin algebras. Our description of the Sabinin brackets associated to $\J(R)$ relays on an auxiliary nonassociative algebra and the Shestakov-Umirbaev  functor $\SU$.

In this initial approach to the loop of formal power series under composition, it will become apparent that a necessary and sufficient condition for (\ref{eq:Nottingham}) to define a group is $R[R,R] = 0$. In this case we get a natural structure of Lie algebra for $R[[t]]$ determined by
\begin{equation}\label{eq:Wronskian_product}
\langle f(t),g(t) \rangle = g(t)'f(t) - f(t)'g(t).
\end{equation}	
where $f':= \frac{d}{dt}(f)$. If $R$ is the ground field $\field$ then we recover the Lie algebra of vector fields on the affine line. In general, given any unital commutative algebra $\phi$ other than $R[[t]]$ and a derivation $\partial$ of $\phi$, (\ref{eq:Wronskian_product}) also defines a (differential) Lie algebra structure on $\phi$. These Lie algebras are called Lie algebras of vector fields on a line \cite{Po17} and they are precisely those Lie algebras that  embed into their Wronskian envelopes (see \cite{Po17} for details), i.e. they are Wronskian special. 

The standard identity $\St_{n+1}$ is defined by the alternating sum
\begin{displaymath}
	\St_{n+1}(x_1,\dots,x_n,z):= \sum_{\sigma} (-1)^{\sigma} [x_{\sigma(1)},[\cdots,[x_{\sigma(n)},z]]]
\end{displaymath} 
where $\sigma$ runs the symmetric group of degree $n$. It is well-known \cite{Be79} that any Lie algebra of vector fields on a line satisfies $\St_5$, i.e. it is a $\St_5$-algebra, so not every Lie algebra is Wronskian special. In fact, whether  $\St_5$-algebras are the same as Wronskian special algebras, even in the case of characteristic zero, is a long-standing problem already considered by Kirillov, Ovsienko and Udalova in \cite{KOU84}. Razmyslov gave an affirmative answer in \cite{Ra85} for simple Lie algebras and later, in \cite{PoRa16}, in a joint work with Pogudin they extended this affirmative answer to prime Lie algebras.

We will show that if $R$ is not commutative but $R[R,R] = 0$ then $(R[[t]],\langle - , - \rangle)$ satisfies $\St_6$, although it might not be a $\St_5$-algebra in general. To ensure that $(R[[t]],\langle - , - \rangle)$  is a $\St_5$-algebra, $[R,R]R^3 = 0$ must be required in addition. It would be interesting to check whether these algebras are Wronskian special or not. 

We will conclude this paper with and appendix on one-sided loops natural in this context.

A word about notation and conventions. The characteristic of the base field $\field$ is zero. $R[[t]]$ (resp. $R[t]$)  denotes the algebra of formal power series (resp. polynomials) in the indeterminate $t$ with coefficients in the algebra $R$.  Given $I=(i_1,\dots,i_n) \in \N^n$ and $\alpha_0,\alpha_1,\dots$ in an associative algebra,  we will use the notation $l(I):= n$, $\vert I \vert := i_1 + \cdots + i_n$ and $\alpha_I:= \alpha_{i_1}\cdots \alpha_{i_n}$.  The left and right multiplication operators by $x$, with respect to a product $xy$, will be denoted by $L_x$ and $R_x$. Sometimes,  instead of juxtaposition $\circ, *, \dots$ will be preferable for some  products, and we will use $L^{\circ}_x, R^{\circ}_x, L^*_x, R^*_x, \dots$ accordingly.

%
\section{The loop $(\Jg(\ring), \circ)$ of formal power series under composition}

Let $\ring := \bigoplus_{n \geq 1} \ring_n$ be a graded associative algebra over a field $\field$ and $\hat{\ring}$ its  completion with respect to the grading. Our object of study is the set
\begin{displaymath}
\Jg(\ring):= 1 + \hat{\ring},
\end{displaymath}
where $1$ is a formal unit element, with the product modeled on the composition of formal power series\footnote{We identify the formal power series $t + \alpha_1 t^2 + \alpha_2 t^3 + \cdots$ with $1 + \alpha_1 + \alpha_2 + \cdots$.}
\begin{equation}\label{eq:substitution}
\left(\sum_{m\geq 0} \alpha_m\right) \circ \left(\sum_{n \geq 0} \beta_n\right):= \sum_{m \geq 0} \alpha_m \left(\sum_{n \geq 0} \beta_n \right)^{m+1} = \sum_{k \geq 0}  \gamma_k 
\end{equation}
where $\alpha_0 = 1 = \beta_0$ and 
\begin{equation}\label{eq:gamma}
\gamma_k := \sum_{m\geq 0} \alpha_m  \left(\sum_{\substack{l(J) = m+1 \\ \vert J \vert + m = k}} \beta_J\right).
\end{equation}

\begin{proposition}\label{prop:loop}
	$(\Jg(\ring), \circ )$ is a loop with unit element $1$.
\end{proposition}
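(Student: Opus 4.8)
The plan is to check the loop axioms directly from the defining formula \eqref{eq:gamma}: that $1$ is a two-sided unit, and that for every $f \in \Jg(\ring)$ the left and right translations $L^{\circ}_f$ and $R^{\circ}_f$ are bijections of $\Jg(\ring)$. A preliminary point is that $\circ$ is well defined, i.e.\ that the right-hand side of \eqref{eq:substitution} again lies in $1 + \hat{\ring}$: for a fixed $k$ the index $m$ in \eqref{eq:gamma} runs only over $0 \leq m \leq k$, since $\vert J\vert = k-m \geq 0$, and for each such $m$ there are only finitely many tuples $J$ of length $m+1$ with $\vert J\vert = k-m$; hence $\gamma_k$ is a finite sum of products of homogeneous elements of total degree $k$, so $\gamma_k \in \ring_k$ for $k\geq 1$ and $\gamma_0 = \alpha_0\beta_0 = 1$.

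The key observation, from which everything else follows, is that \eqref{eq:gamma} has a triangular shape. Writing $f = \sum_{m\geq 0}\alpha_m$, $g = \sum_{n\geq 0}\beta_n$ and $f\circ g = \sum_{k\geq 0}\gamma_k$, I would isolate in $\gamma_k$ the term $m=0$, which forces $J = (k)$ and contributes $\beta_k$, and the term $m=k$, which forces $J = (0,\dots,0)$ and contributes $\alpha_k$, to obtain, for $k\geq 1$,
\begin{equation}\label{eq:triangular}
	\gamma_k = \alpha_k + \beta_k + \Gamma_k(\alpha_1,\dots,\alpha_{k-1};\beta_1,\dots,\beta_{k-1}),
\end{equation}
where $\Gamma_k := \sum_{m=1}^{k-1}\alpha_m\bigl(\sum_{l(J)=m+1,\,\vert J\vert = k-m}\beta_J\bigr)$ is a fixed noncommutative polynomial in $\alpha_1,\dots,\alpha_{k-1}$ and $\beta_1,\dots,\beta_{k-1}$ only, since the $\alpha$'s occurring have index between $1$ and $k-1$ and, because $\vert J\vert = k-m \leq k-1$, every $\beta_{j_i}$ occurring has index at most $k-1$ (recall $\beta_0 = 1$). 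Setting $\alpha_m = 0$ for $m\geq 1$ in \eqref{eq:triangular} gives $\gamma_k = \beta_k$, that is $1\circ g = g$; setting $\beta_n = 0$ for $n\geq 1$ annihilates every $\beta_J$ with $\vert J\vert\geq 1$ and gives $\gamma_k = \alpha_k$, that is $f\circ 1 = f$. Hence $1$ is a two-sided unit.

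For the translations I would argue by induction on $k$ using \eqref{eq:triangular}. Fixing $f = \sum\alpha_m$ and $h = \sum\gamma_k$ in $\Jg(\ring)$, solving $f\circ g = h$ for $g = \sum\beta_n$ amounts to the recursion $\beta_k = \gamma_k - \alpha_k - \Gamma_k(\alpha_1,\dots,\alpha_{k-1};\beta_1,\dots,\beta_{k-1})$, $k\geq 1$, whose right-hand side involves only the $\beta_1,\dots,\beta_{k-1}$ already determined at earlier stages; so $g$ exists and is unique, proving that $L^{\circ}_f$ is a bijection of $\Jg(\ring)$. Symmetrically, fixing $g$ and $h$ and solving for $f$ gives $\alpha_k = \gamma_k - \beta_k - \Gamma_k(\alpha_1,\dots,\alpha_{k-1};\beta_1,\dots,\beta_{k-1})$, which determines $f$ uniquely by the same induction, so $R^{\circ}_g$ is a bijection for every $g$. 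Together with the unit property this shows $(\Jg(\ring),\circ)$ is a loop.

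I do not expect a real obstacle: associativity is not part of the claim (and fails in general), so the argument is exactly the bookkeeping above. The only step that needs care is the extraction of the triangular identity \eqref{eq:triangular} from \eqref{eq:gamma} --- checking that the coefficients of $\alpha_k$ and $\beta_k$ are precisely $1$, which uses that $\alpha_0$ and $\beta_0$ act as the identity, and that neither $\alpha_k$ nor $\beta_k$ is hidden inside $\Gamma_k$; once this is settled, the two inductions are immediate.
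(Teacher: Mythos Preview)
Your proof is correct and follows essentially the same approach as the paper: both arguments hinge on the observation that in \eqref{eq:gamma} the coefficients of $\alpha_k$ and of $\beta_k$ are exactly $1$, with the remaining terms depending only on lower-degree coefficients, so that the equations $f\circ g = h$ can be solved uniquely for either $f$ or $g$ by recursion on $k$. Your write-up is simply more explicit---you record the triangular identity \eqref{eq:triangular} and the well-definedness of $\circ$ that the paper leaves implicit---but there is no substantive difference in method.
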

\begin{proof}
	On the one hand, if $\alpha_m := 0$ for all $m \geq 1$ in (\ref{eq:gamma}) then $\gamma_k = \alpha_0 \beta_k = \beta_k$, which implies that $1$ is a left unit element. If $\beta_m = 0$ for all $m \geq 1$ then $(\ref{eq:gamma})$ gives $\gamma_k = \alpha_k \beta_0 = \alpha_k$, so $1$ is also a right unit element. On the other hand, the coefficient of $\alpha_k$ (resp. $\beta_k$) in (\ref{eq:gamma}) is $1$. Thus, given $\sum_{n\geq 0} \beta_n$ (resp. $\sum_{m\geq 0} \alpha_m$) and $\sum_{k\geq 0} \gamma_k$  there exists a unique solution $\sum_{m\geq 0} \alpha_m$ (resp. $\sum_{n\geq 0} \beta_n$) of (\ref{eq:substitution}). This proves that the left and right multiplication operators by elements of $\Jg(\ring)$ are bijective.
\end{proof}

\begin{example}
In general, if $\ring$ is not commutative then $(\Jg(\ring),  \circ)$  is not a group. For instance,  the component of degree $3$  of  $((1 + \alpha_1) \circ (1 + \beta_1)) \circ (1 + \gamma_1)$ is 
\begin{displaymath}
 \alpha_1 \gamma^2_1 + \alpha_1 \beta^2_1 + \beta_1 \gamma^2_1 + 6 \alpha_1 \beta_1 \gamma_1
\end{displaymath}
while the corresponding component of $(1 + \alpha_1) \circ ((1 + \beta_1) \circ (1 + \gamma_1))$ is
\begin{displaymath}
  \alpha_1 \gamma^2_1 + \alpha_1 \beta^2_1 + \beta_1 \gamma^2_1 +  5 \alpha_1 \beta_1 \gamma_1 + \alpha_1 \gamma_1 \beta_1,
\end{displaymath}
i.e. they differ by  $\alpha_1(\beta_1 \gamma_1 - \gamma_1 \beta_1)$, which is nonzero in general. \hfill$\square$
\end{example}

%
\subsection{The role of the coefficients.}\label{subsec:coefficients} 

To understand the role of the commutativity of the coefficients when composing formal power series, consider $R$ to be a unital associative commutative   algebra and $R[[t]]$  the algebra of formal power series in the indeterminate $t$. Given 
$s := s(t):= \sum_{i \geq 0} s_i t^i \in R[[t]]$ with $s_0 := 0$ and invertible $s_1$, the substitution
\begin{align*}
\sigma_s \colon R[[t]] & \rightarrow R[[t]] \\
a(t) & \mapsto a(t)^{\sigma_{s}}:=a(s(t))
\end{align*}
is an automorphism of $R[[t]]$, which implies $\sigma_a \sigma_b = \sigma_{a(b(t))}$ --notice that substitutions are forced to act on the right to get this formula. Thus, in this case the associativity of formal power series under substitution mirrors the associativity of the composition of automorphisms of $R[[t]]$. However, if $R$ is noncommutative then the substitution $\sigma_{s}$ is no longer an automorphism. For instance,
\begin{displaymath}
(t (\alpha_1 t))^{\sigma_{s}} = (\alpha_1 t^2)^{\sigma_{s}} = \alpha_1 s^2 \quad  \text{but} \quad t^{\sigma_{s}} (\alpha_1 t)^{\sigma_{s}} = s \alpha_1 s.
\end{displaymath}
Therefore, while substitution still defines a binary operation, it might not be associative anymore. Consider the group $G$ generated by all the elements $\sigma_s$ with $s = \sum_{i \geq 0} s_i t^i$, $s_0 = 0$ and invertible $s_1$, and let $H$ be the stabilizer of $t$ in $G$. The set $Q$ of all $\sigma_s$ in $G$ is a right transversal of $H$, i.e. any $\sigma \in G$ can be uniquely decomposed as $\sigma = \sigma_0 \sigma_s$ with $\sigma_0 \in H$ and $\sigma_s \in Q$. Clearly $H\sigma_a \sigma_b = H \sigma_{a(b(t))}$. Thus, $a(b(t))$ mirrors a corresponding product on the transversal $Q$: given $\sigma_a,\sigma_b \in Q$ consider the product of $\sigma_a$ and $\sigma_b$ to be the unique element in $Q \cap H \sigma_a\sigma_b$. Since we cannot ensure that this element, which is $\sigma_{a(b(t))}$, is the composition $\sigma_a\sigma_b$, associativity might be lost.

\begin{example}
	Even if $\ring$ is noncommutative, $\Jg(\ring)$ might be a group. For instance, consider the algebra of  $3 \times 3$ upper triangular matrices with the gradation $\ring_1 := \field E_{12} + \field E_{23}$ and $\ring_2 := \field E_{13}$, where $E_{ij}$ stands for the matrix whose only nonzero element is a $1$ placed in the position $(i,j)$. It is easy to check that 
	\begin{displaymath}
	 (1 + a) \circ (1 + b) = 1 + a + b + 2ab,
	\end{displaymath}
	which defines a group structure. However, if $\ring$ is the algebra of $n \times n$ upper triangular matrices then $\Jg(\ring)$ is no longer a group in general. \hfill$\square$
\end{example}

\begin{proposition}
	Let $\field$ be a field of characteristic zero and $\ring = \oplus_{n \geq 1} \ring_n$ a graded associative  $\field$-algebra. Then $(\Jg(\ring),\circ)$ is a group if and only if $\ring[\ring,\ring] = 0$.
\end{proposition}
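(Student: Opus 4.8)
The plan is to unwind (\ref{eq:substitution}) into a single closed formula comparing the two bracketings of a triple product, and then to play that formula off against the classical, commutative situation. Write the arguments as $x=1+a$, $y=1+b$, $z=1+c$ with $a=\sum_{m\ge 1}\alpha_m$, $b=\sum_{n\ge 1}\beta_n$, $c=\sum_{p\ge 1}\gamma_p$ in $\hat\ring$, adopting the convention $\alpha_0=\beta_0=\gamma_0=1$ so that $x=\sum_{m\ge 0}\alpha_m$, etc., and put $h:=y\circ z=(1+b)\circ(1+c)$. Collecting the homogeneous components of $x\circ y$, feeding them into $w\mapsto w\circ z$, and using $(1+c)^{\,j+m+1}=(1+c)^{\,j+1}(1+c)^{m}$, a direct manipulation of (\ref{eq:substitution}) and (\ref{eq:gamma}) should yield
\begin{align*}
(x\circ y)\circ z&=h+\sum_{m\ge 1}\alpha_m\bigl((1+b)^{m+1}\circ(1+c)\bigr)(1+c)^{m},\\
x\circ(y\circ z)&=h+\sum_{m\ge 1}\alpha_m\,h^{\,m+1},
\end{align*}
where $(1+b)^{m+1}$ is the $(m+1)$-st power of $1+b$ in $\hat\ring$. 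Thus $(x\circ y)\circ z$ and $x\circ(y\circ z)$ differ precisely by $\sum_{m\ge 1}\alpha_m D_m$, where $D_m:=\bigl((1+b)^{m+1}\circ(1+c)\bigr)(1+c)^{m}-h^{\,m+1}$, and $(\Jg(\ring),\circ)$ is a group if and only if $\sum_{m\ge 1}\alpha_m D_m=0$ for all $x,y,z$. Establishing these two formulas is the step I expect to take the most work; it is essentially the only bookkeeping in the argument, after which both implications are short.

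For the implication $\ring[\ring,\ring]=0\Rightarrow$ group, let $K$ be the (necessarily graded) two-sided ideal of $\ring$ generated by $[\ring,\ring]$. Every element of $K$ is a $\field$-combination of products $u[r,s]v$ with $r,s\in\ring$ and $u,v$ in $\ring$ or the unit; since $\ring[\ring,\ring]=0$ this forces $\ring K=0$, hence $\hat\ring\,\hat K=0$. The quotient $\bar\ring:=\ring/K$ is graded commutative, so $(\Jg(\bar\ring),\circ)$ is a group — the classical commutative case of (\ref{eq:substitution}), cf.\ \cite{Je54} — and in particular, in the completion $\hat{\bar\ring}$,
\[
\bigl((1+\bar b)^{m+1}\circ(1+\bar c)\bigr)(1+\bar c)^{m}=\bigl((1+\bar b)\circ(1+\bar c)\bigr)^{m+1}
\]
for every $m$; one may also check this by hand, both sides equalling $\sum\bar\beta_{n_1}\cdots\bar\beta_{n_{m+1}}(1+\bar c)^{(n_1+1)+\cdots+(n_{m+1}+1)}$ over $(n_1,\dots,n_{m+1})\in\N^{m+1}$. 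Since the reduction $\hat\ring\to\hat{\bar\ring}$ is a ring homomorphism compatible with $\circ$, it carries $D_m$ to $0$, i.e.\ $D_m\in\hat K$; therefore $\alpha_m D_m\in\hat\ring\,\hat K=0$ for every $m$, and the two bracketings coincide.

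For the converse, assume $(\Jg(\ring),\circ)$ is a group. Fix $i,j,k\ge 1$ and take $\alpha_i\in\ring_i$, $\beta_j\in\ring_j$, $\gamma_k\in\ring_k$; applying the criterion to $x=1+\alpha_i$, $y=1+\beta_j$, $z=1+\gamma_k$, only the term $m=i$ survives, so $\alpha_i D_i=0$. Expanding $D_i$ with $h=(1+\beta_j)\circ(1+\gamma_k)=(1+\gamma_k)+\beta_j(1+\gamma_k)^{j+1}$ and reading off the homogeneous component of degree $j+k$, the terms quadratic in $\beta_j$ contribute $\binom{i+1}{2}\beta_j^{2}$ to each of the two summands of $D_i$ and cancel, while the $\beta_j$-linear terms give $(i+1)(i+j+1)\,\beta_j\gamma_k$ from $\bigl((1+\beta_j)^{i+1}\circ(1+\gamma_k)\bigr)(1+\gamma_k)^{i}$ against the degree-$(j+k)$ part of $\sum_{s=0}^{i}(1+\gamma_k)^{s}\beta_j(1+\gamma_k)^{\,i+j+1-s}$, namely $\tfrac{i(i+1)}{2}\,\gamma_k\beta_j+\bigl((i+1)(i+j+1)-\tfrac{i(i+1)}{2}\bigr)\beta_j\gamma_k$. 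Subtracting, $[D_i]_{j+k}=\tfrac{i(i+1)}{2}[\beta_j,\gamma_k]$, so $\tfrac{i(i+1)}{2}\,\alpha_i[\beta_j,\gamma_k]=0$. As $\field$ has characteristic zero and $i\ge 1$, $\alpha_i[\beta_j,\gamma_k]=0$ for all homogeneous $\alpha_i,\beta_j,\gamma_k$, and by bilinearity together with the grading $\ring[\ring,\ring]=0$. Making this final extraction fully transparent — organising the degree-$(j+k)$ part of $D_i$ so the above cancellations are manifest — is the only delicate point of the converse.
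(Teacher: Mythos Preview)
Your argument is correct and tracks the paper's proof closely: both reduce the associator to a sum $\sum_{m\ge 1}\alpha_m D_m$ with $D_m$ built from $b,c$ alone (the paper writes this as $\alpha_i\bigl((1+b)^{i+1}\circ(1+c)-(1+c+b\circ(1+c))^{i+1}\bigr)$, which is your $\alpha_i D_i$ since $h=1+c+b\circ(1+c)$), and both then exploit that $\alpha_m$ annihilates anything built from commutators. Your quotient-by-$K$ packaging of the ``if'' direction is a tidy formalisation of what the paper phrases as ``the presence of $\alpha_i$ allows us to assume that the components of $b$ and $c$ commute''.

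One small wrinkle in your ``only if'' extraction: the sentence about the $\beta_j^{2}$-terms contributing $\binom{i+1}{2}\beta_j^{2}$ only makes sense when $j=k$, and for general $j,k$ other pure powers $\beta_j^{r}$ or $\gamma_k^{s}$ could in principle land in degree $j+k$. The clean fix is to note first that $D_i\vert_{\beta_j\to 0}=D_i\vert_{\gamma_k\to 0}=0$ (immediate from your formula), so every monomial in $[D_i]_{j+k}$ involves at least one $\beta_j$ and one $\gamma_k$; since $j,k\ge 1$, the equation $aj+bk=j+k$ with $a,b\ge 1$ forces $a=b=1$, and your $\beta_j$-linear computation then gives $[D_i]_{j+k}=\tfrac{i(i+1)}{2}[\beta_j,\gamma_k]$ on the nose. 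The paper sidesteps this bookkeeping entirely by inserting scalars $\lambda,\lambda',\lambda''$ and reading off the coefficient of $\lambda\lambda'\lambda''$, which isolates the trilinear part automatically; that device is worth knowing, as it replaces the degree-counting with a one-line polarization.
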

\begin{proof}
	Let $\lambda, \lambda', \lambda''$ be scalars in $\field$, which is infinite, and $\alpha_i \in \ring_i, \beta_j \in \ring_j, \gamma_k \in \ring_k$. We will compute the coefficient of $\lambda\lambda'\lambda''$ in $s := ((1 + \lambda\alpha_i) \circ (1 + \lambda'\beta_j)) \circ (1 + \lambda''\gamma_k) - (1 + \lambda\alpha_i) \circ ((1 + \lambda'\beta_j) \circ (1 + \lambda''\gamma_k))$. We use the symbol $\equiv$ to indicate that two elements share the same coefficient in  $\lambda\lambda'\lambda''$. We have
	\begin{align*}
		s & \equiv (1 + \lambda' \beta_j + \lambda \alpha_i (1 + \lambda'\beta_j)^{i+1})\circ ( 1 + \lambda'' \gamma_k) \\
		& \quad - (1+ \lambda \alpha_i) \circ (1 + \lambda'' \gamma_k + \lambda' \beta_j (1+ \lambda''\gamma_k)^{j+1})\\
		& \equiv (i+1) \lambda \lambda' \alpha_i   \beta_j (1 + \lambda'' \gamma_k)^{i+j+1} \\
		& \quad - \lambda \alpha_i (1 + \lambda'' \gamma_k + \lambda'\beta_j + \lambda'\lambda''(j+1) \beta_j\gamma_k)^{i+1}\\
		& \equiv (i+1)(i+j+1)\lambda\lambda'\lambda'' \alpha_i \beta_j \gamma_k \\
		& \quad - \lambda\alpha_i \left(\binom{i+1}{2}\lambda'\lambda''\beta_j\gamma_k + \binom{i+1}{2}\lambda'\lambda''\gamma_k\beta_j + (i+1)(j+1)\lambda'\lambda''\beta_j\gamma_k \right)\\
		& \equiv \binom{i+1}{2}\lambda\lambda'\lambda''\alpha_i[\beta_j,\gamma_k].
	\end{align*}
	Therefore if $\Jg(\ring)$ is a group then $\ring[\ring,\ring] = 0$. Conversely, given $1+a,1+b,1+c \in \Jg(\ring)$, consider $s' := ((1+a)\circ (1+b))\circ (1+c) - (1+a) \circ ((1+b)\circ (1+c))$. We have
	\begin{align*}
		s' &= (1+b + a\circ (1+b))\circ (1+c) - (1+a) \circ (1+c + b \circ (1+c)) \\
		&= (1+c + b \circ (1+c) +(a \circ (1+b))\circ (1+c))\\
		 &\quad - (1+ c + b \circ(1+c) +a \circ (1+c+b \circ(1+c))) \\
		&= (a \circ (1+b))\circ (1+c) - a \circ (1+c+b \circ(1+c)).
	\end{align*}
	Thus, without loss of generality we can assume $a = \alpha_i \in \ring_i$. We have
	\begin{align*}
		s' &=  (\alpha_i (1+b)^{i+1})\circ (1+c) - \alpha_i (1+c+b \circ(1+c))^{i+1}\\
		&= \alpha_i \left( (1+b)^{i+1}\circ (1+c) - (1+c+b \circ(1+c))^{i+1}\right).
	\end{align*}
	Since $\ring[\ring,\ring] = 0$, the presence of $\alpha_i$ allows us to assume that the components of $b$ and $c$ not only associate but they also commute, and in that case it is easy to check that $(1+b)^{i+1}\circ (1+c) - (1+c+b \circ(1+c))^{i+1} = 0$. 
\end{proof}

%
\subsection{The loop $(\Jg(\ring),*)$.} 

We introduce a new non-associative continuous product on $\hat{\ring}$ by
\begin{equation}\label{eq:*}
 \alpha_m  * \beta_n =  (m+1) \alpha_m \beta_n
\end{equation}
for homogeneous $\alpha_m, \beta_n$, and we assume $1$ to be a formal unit element for this new product. It would be misleading to think of $1$ as an element of degree $0$ since in that case (\ref{eq:*}) would imply that $1$ is a left unit element but not a right one. 

\begin{proposition}
	$(\Jg(\ring), * )$ is a loop with unit element $1$.
\end{proposition}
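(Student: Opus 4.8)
The plan is to follow the template of the proof of Proposition~\ref{prop:loop}. Since $1$ is imposed as a formal two-sided unit for $*$, nothing needs to be checked there, and the content of the statement is the bijectivity of the left and right multiplication operators $L^{*}_{1+a}$ and $R^{*}_{1+b}$ on $\Jg(\ring)$. First I would record the relevant expansion: writing $a=\sum_{m\geq 1}\alpha_m$ and $b=\sum_{n\geq 1}\beta_n$ with homogeneous components, expanding $(1+a)*(1+b)=1*1+1*b+a*1+a*b$, and keeping in mind that the rule $\alpha_m*\beta_n=(m+1)\alpha_m\beta_n$ of (\ref{eq:*}) applies only to homogeneous factors of positive degree (so that $\alpha_m*1=\alpha_m$ and $1*\beta_n=\beta_n$), the degree $k$ component of $(1+a)*(1+b)$ is
\[
\gamma_k=\alpha_k+\beta_k+\sum_{\substack{m+n=k\\ m,n\geq 1}}(m+1)\,\alpha_m\beta_n,\qquad k\geq 1,
\]
the sum being empty when $k=1$. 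Each $\gamma_k$ is a finite sum, so $*$ is a well defined (and continuous) binary operation on $\Jg(\ring)=1+\hat{\ring}$.

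To see that $L^{*}_{1+a}$ is bijective, I would fix $1+a$ and an arbitrary target $1+c$ with $c=\sum_{k\geq 1}\gamma_k$, and solve $(1+a)*(1+b)=1+c$ for $1+b$ degree by degree. In degree $1$ one obtains $\beta_1=\gamma_1-\alpha_1$; in degree $k\geq 2$ the displayed identity rearranges to
\[
\beta_k=\gamma_k-\alpha_k-\sum_{\substack{m+n=k\\ m,n\geq 1}}(m+1)\,\alpha_m\beta_n,
\]
whose right-hand side involves only $\beta_n$ with $n\leq k-1$. Hence $\beta_k$ is determined uniquely and recursively, so $L^{*}_{1+a}$ is a bijection of $\Jg(\ring)$. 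For $R^{*}_{1+b}$ I would argue symmetrically, fixing $1+b$ and solving for $1+a$: in degree $k$ the unknown $\alpha_k$ again occurs with coefficient $1$, coming from the summand $a*1=a$ and not from $a*b$ (since in the sum one has $m\leq k-1$), while the remaining $\alpha$'s appearing have degree at most $k-1$; so $\alpha_k$ is determined uniquely and recursively.

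There is no real obstacle here, but the one point worth flagging is exactly the one just used. Because $1$ is a \emph{formal} unit rather than an element of degree $0$, the leading unknown in each degree ($\beta_k$ for the left operator, $\alpha_k$ for the right one) enters with coefficient $1$ and not with the factor $m+1$ of (\ref{eq:*}); consequently the triangular systems are solvable over $\field$ with no hypothesis on its characteristic and with no integer to invert. This gives that $(\Jg(\ring),*)$ is a loop with unit element $1$.
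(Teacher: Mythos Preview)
Your proof is correct and follows essentially the same approach as the paper: you compute the same degree-$k$ component $\alpha_k+\beta_k+\sum_{m+n=k,\,m,n\geq 1}(m+1)\alpha_m\beta_n$ and then argue, as in Proposition~\ref{prop:loop}, that the leading unknown appears with coefficient $1$, making the triangular system uniquely solvable. You spell out the recursion more explicitly than the paper does, and your remark that the formal nature of $1$ is what guarantees the coefficient $1$ (rather than some $m+1$) is exactly the point the paper alludes to immediately after equation~(\ref{eq:*}).
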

\begin{proof}
The product of $\sum_{m\geq 0} \alpha_m$ and $\sum_{n \geq 0} \beta_n$ is given by
\begin{displaymath}
  1 + \sum_{m\geq 1} \alpha_m + \sum_{n \geq 1} \beta_n + \sum_{n,m \geq 1} (m+1) \alpha_m \beta_n
\end{displaymath}	
and the coefficient of degree $k \geq 1$ is 
\begin{displaymath}
\alpha_k + \beta_k + \sum_{\substack{m + n = k \\ n,m\geq 1}} (m+1)\alpha_m \beta_n.
\end{displaymath}
As in the proof of Proposition~\ref{prop:loop} this easily leads to the bijectivity of the left and right multiplication operators by elements of $\Jg(\ring)$.
\end{proof}

Notice that $(\Jg(\ring), *)$ is isomorphic to $\hat{\ring}$ with the operation $(\alpha,\beta) \mapsto \alpha + \beta + \alpha * \beta$, which is the natural way of obtaining a local loop around $0$ out of any (not necessarily unital) algebra. Next we will compare the loops $(\Jg(\ring),\circ)$ and $(\Jg(\ring), *)$.

\begin{proposition}\label{prop:differential}
	For any  $a \in \Jg(\ring)$ and $b \in \hat{\ring}$  we have
	\begin{displaymath}
	\left.\frac{d}{dt}\right\vert_{t=0}L^\circ_a(1 + t b) = \left.\frac{d}{dt}\right\vert_{t=0}L^*_a(1 + t b).
	\end{displaymath}
\end{proposition}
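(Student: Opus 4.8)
The plan is to carry out a direct first-order expansion in $t$ of both curves through $1$ and to observe that their linear terms coincide. First I would record that, with $a \in \Jg(\ring)$ and $b \in \hat{\ring}$ fixed, each homogeneous component of $L^\circ_a(1+tb)$ and of $L^*_a(1+tb)$ is a polynomial in $t$: for the $\circ$-product this is because, by (\ref{eq:substitution}) and (\ref{eq:gamma}), the degree-$k$ component only involves the $\alpha_m$ together with products $b_{n_1}\cdots b_{n_r}$ with $m+n_1+\cdots+n_r=k$ and each $n_i\geq 1$, every such product carrying the scalar factor $t^r$ with $r\leq k$; for the $*$-product it is immediate from the formula for the loop product of $(\Jg(\ring),*)$. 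Hence $\frac{d}{dt}\vert_{t=0}$ simply extracts the coefficient of $t^1$ in each graded piece, which makes the differentiation legitimate.

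Next I would compute the $\circ$-side. Writing $a=\sum_{m\geq 0}\alpha_m$ with $\alpha_0=1$ and $b=\sum_{n\geq 1}b_n$, and using that $t$ is central, $(1+tb)^{m+1}=\sum_{k\geq 0}\binom{m+1}{k}t^kb^k$, so that
\[
L^\circ_a(1+tb)=\sum_{m\geq 0}\alpha_m(1+tb)^{m+1}
\]
has coefficient of $t$ equal to $\sum_{m\geq 0}(m+1)\alpha_m b$. For the $*$-side I would decompose $a=1+a'$ with $a':=\sum_{m\geq 1}\alpha_m$; since $1$ is the unit of $(\Jg(\ring),*)$ and $\alpha_m*\beta_n=(m+1)\alpha_m\beta_n$ by (\ref{eq:*}), the loop product gives $L^*_a(1+tb)=1+a'+tb+t\sum_{m\geq 1}(m+1)\alpha_m b$, whose coefficient of $t$ is $b+\sum_{m\geq 1}(m+1)\alpha_m b$. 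Recalling $\alpha_0=1$, this equals $\sum_{m\geq 0}(m+1)\alpha_m b$, which matches the $\circ$-side, and the proposition follows.

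I do not expect a genuine obstacle here: the computation is routine once the first-order expansion is set up. The only points that require care are the bookkeeping around the formal unit $1$ (which must not be treated as an element of degree $0$, although this is harmless in the present statement since only left multiplication by $a$ intervenes) and the polynomiality observation of the first step, which is what legitimizes taking $\frac{d}{dt}\vert_{t=0}$.
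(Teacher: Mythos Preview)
Your proof is correct and follows the same direct computation as the paper's: both sides are shown to equal $\sum_{m\geq 0}(m+1)\alpha_m b$. The paper's argument is terser, simply writing this common value as $a*b$ and omitting the polynomiality justification you supply; your additional care about the formal unit and the legitimacy of $\frac{d}{dt}\vert_{t=0}$ is sound but not a different route.
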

\begin{proof}
	Consider $a = \sum_{m \geq 0} \alpha_m$ with $\alpha_0 = 1$. Clearly
	\begin{align*}
		\left.\frac{d}{dt}\right\vert_{t=0}L^\circ_a(1 + t b) &= \left.\frac{d}{dt}\right\vert_{t=0} \sum_{m \geq 0} \alpha_m (1+tb)^{m+1} \\
		&= \sum_{m \geq 0} (m+1) \alpha_m  b = a* b = \left.\frac{d}{dt}\right\vert_{t=0}L^*_a(1 + t b).
	\end{align*}
\end{proof}

In the approach by Mikheev and Sabinin, any local analytic loop induces a parallel transportation $\tau^x_y(\xi_x) := dL_y\vert_e (dL_x\vert_e)^{-1}(\xi_x)$ so that the associated right monoalternative loop is obtained by $x \times y := \exp_x \tau^e_x \exp^{-1}_e(y)$. Two local analytic loops $x \circ y$ and $x * y$ for which the differential of $L^{\circ}_x$ and $L^{*}_x$ at $e$ agree define the same parallel transportation and also the same associated right monoalternative loop. Thus, the Sabinin brackets for both loops are the same. Proposition~\ref{prop:differential} suggests that we can compute the Sabinin brackets for $(\Jg(S),\circ)$ by means of the simpler product $x*y$. While this is the leading idea behind our computations, since $\J^g(S)$ has no structure of analytic manifold, we are concerned with the Sabinin brackets related to the natural filtration of $\Jg(\ring)$ rather than with the Sabinin brackets that appear in the geometrical context where the theory originally arose.

Notice that if we set $a \bullet b:= b + a \circ (1+b)$ for any $a,b \in \hat{\ring}$ then we obtain a loop structure on $\hat{\ring}$ isomorphic to $(\Jg(\ring), \circ)$. The connection between the loop $(\Jg(\ring),\circ)$ and the simpler structure $(\hat{\ring},*)$ is due to the linearity  in $a$ of $a \bullet b - b$.
\begin{proposition}
	Let $(V,a \bullet b)$ be an analytic loop structure on a finite-dimensional real vector space $V$ such that $a \bullet b - b$ is linear in $a$. Then 
	\begin{displaymath}
	 a* b : = dL^{\bullet}_a \vert_0 (b) - b
	\end{displaymath}
	is a bilinear product, $ab := a + b + a*b$ defines a local analytic loop at $0$ and 
	\begin{displaymath}
		dL^{\bullet}_a\vert_0 = dL_a\vert_0.
	\end{displaymath}
\end{proposition}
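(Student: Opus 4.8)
The plan is to unwind everything from the linearity hypothesis and the fact that $\bullet$ has unit $0$. First I would record the consequences of the unit axiom for $\bullet$: since $0 \bullet b = b$ and $a \bullet 0 = a$ for all $a,b \in V$. Writing $a \bullet b = b + \phi(a,b)$ where, by hypothesis, $\phi$ is linear in the first slot, the identity $a \bullet 0 = a$ forces $\phi(a,0) = a$, so $L^\bullet_a(b) = b + \phi(a,b)$ with $\phi(\cdot,b)$ linear for each fixed $b$ and $\phi(\cdot,0) = \mathrm{id}_V$. Now $dL^\bullet_a\vert_0$ is the differential at $b=0$ of $b \mapsto b + \phi(a,b)$; since $\phi(a,-)$ is a smooth (analytic) map $V \to V$ with $\phi(a,0)=a$, its differential at $0$ is a linear operator, and I would \emph{define} $a * b := d(\phi(a,-))\vert_0(b) = dL^\bullet_a\vert_0(b) - b$ (the "$-b$" cancels the trivial summand). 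The point to check is that $a * b$ is bilinear: linearity in $b$ is automatic because a differential is linear, and linearity in $a$ follows because $\phi(\cdot,b)$ is linear in $a$ for each $b$, hence the same holds after differentiating in $b$ at $0$ (differentiation in the second variable commutes with the linear structure in the first). This gives the bilinear product.

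Next I would verify that $ab := a + b + a*b$ is a local analytic loop at $0$. The unit property is immediate: $0 \cdot b = b + 0*b$ and $a \cdot 0 = a + a*0$, and $0 * b = d(\phi(0,-))\vert_0(b) - b$; but $\phi(0,b) = (0\bullet b) - b = 0$ identically in $b$, so $0 * b = -b$ and $0\cdot b = b$; similarly $a*0 = d(\phi(a,-))\vert_0(0) = 0$ since a linear map sends $0$ to $0$, giving $a\cdot 0 = a$. For the loop axioms I need $L^{\cdot}_a$ and $R^{\cdot}_a$ to be invertible near $0$; since $L^{\cdot}_a(b) = a + b + a*b = a + (\mathrm{id} + a*-)(b)$, its differential at $0$ is $\mathrm{id}_V + d(a*-)\vert_0$. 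Because $a*b$ is bilinear, $d(a*-)\vert_0 = a*-$ as a linear map, and at $a = 0$ this differential is $\mathrm{id}_V$, which is invertible; analyticity of $a*b$ then lets me invoke the implicit/inverse function theorem to get local invertibility of $L^\cdot_a$ and $R^\cdot_a$ for $a$ in a neighborhood of $0$, which is exactly what "local analytic loop at $0$" requires. (I would be slightly careful to state the neighborhoods uniformly so the divisions are themselves analytic, but this is standard.)

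Finally, the identity $dL^\bullet_a\vert_0 = dL_a\vert_0$, where $L_a$ denotes left multiplication for the product $ab = a+b+a*b$: we have $L_a(b) = a + b + a*b$, so $dL_a\vert_0(b) = b + d(a*-)\vert_0(b) = b + a*b = b + (dL^\bullet_a\vert_0(b) - b) = dL^\bullet_a\vert_0(b)$, using bilinearity of $*$ in the middle step and the definition of $*$ in the last. So the two differentials literally coincide, and this is the content of the conclusion.

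The only genuinely non-formal point is the bilinearity of $a*b$, specifically linearity in $a$: one must be sure that differentiating $\phi(a,-)$ in its second argument at $0$ preserves the linear dependence on $a$. I would justify this by noting that for fixed $b$ the map $a \mapsto \phi(a,b)$ is linear, so for each $a$, $a'$ and scalar $\lambda$ the analytic curves $b \mapsto \phi(\lambda a + a', b)$ and $b \mapsto \lambda\phi(a,b) + \phi(a',b)$ agree identically, hence so do their derivatives at $b = 0$; thus $a \mapsto (a * b)$ is linear. Everything else is a direct unwinding of the unit axioms and the inverse function theorem, so I do not expect a real obstacle there.
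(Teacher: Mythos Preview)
Your approach is essentially the same as the paper's: both obtain bilinearity of $*$ by noting that linearity in $b$ comes from the differential and linearity in $a$ from the hypothesis on $\phi(a,b)=a\bullet b - b$, and both verify $dL^{\bullet}_a\vert_0 = dL_a\vert_0$ by direct computation (the paper writes it as $\tfrac{d}{d\lambda}\vert_{\lambda=0}\,a(\lambda b)=b+a*b$). Two small points: first, the paper actually \emph{derives} that $0$ is the unit of $\bullet$ from the linearity hypothesis (since $\phi(0,b)=0$ forces $0\bullet b=b$), whereas you assume it; second, you have an arithmetic slip in the unit check: by your own definition $a*b = d(\phi(a,-))\vert_0(b)$ (the $-b$ is already absorbed there), so $0*b = d(\phi(0,-))\vert_0(b)=0$, not $-b$ --- your conclusion $0\cdot b = b$ is correct but would not follow from $0*b=-b$. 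Your verification of the local loop property via the inverse function theorem is more explicit than what the paper writes.
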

\begin{proof}
	The linearity of $a \bullet b - b$ in $a$ implies $0 \bullet b - b = 0$, i.e. $0 \bullet b = b$, thus $0$ is he unit element of $(V, a \bullet b)$. We have
	\begin{displaymath}
		a* b = dL^{\bullet}_a\vert_0(b) - b= \left.\frac{d}{d \lambda }\right\vert_{\lambda =0} (a \bullet (\lambda b) - (\lambda b))
	\end{displaymath}
	which is linear in $b$ and, by hypothesis, also in $a$. Clearly
	\begin{displaymath}
		dL_a\vert_0 (b) =  \left.\frac{d}{d \lambda }\right\vert_{\lambda =0} a (\lambda b) =  \left.\frac{d}{d \lambda }\right\vert_{\lambda =0} (a + \lambda b + a*(\lambda b))= b + a *b = dL^{\bullet}_a\vert_0 (b).
	\end{displaymath}
\end{proof}

%
%
%
\section{Shestakov-Umirbaev brackets for the product $*$} 
 
In \cite{MoPe10} it was proved that the Sabinin brackets of the loop  around $0$ with product $x + y + x*y$ associated to any nonassociative algebra with product $x*y$ agree with those obtained through the Shestakov-Umirbaev functor $\SU$. By Proposition~\ref{prop:differential} we only have to compute these brackets for the product (\ref{eq:*}) to obtain the Sabinin brackets for analytic $(\Jg(S),\circ)$. So, let us first recall how the functor functor $\SU$ was defined in \cite{ShU02}.

Let $(A,*)$ be an algebra over $\field$. Given $I:=(i_1,\dots,i_m)$ and $a_{i_1}, \dots, a_{i_m} \in A$, let us define
\begin{equation}\label{eq:tensors}
	\abf_I:= a_{i_1} \otimes \cdots \otimes a_{i_m}, \quad a^*_I:= ((a_{i_1} * a_{i_2})\cdots ) * a_{i_m} 
\end{equation}
and 
\begin{displaymath}
	\sum \abf_{I(1)} \otimes \abf_{I(2)} := \sum_{p = 0}^m\sum_{\sigma} \big(a_{i_{\sigma(1)}} \otimes \cdots \otimes a_{i_{\sigma(r)}}\big) \otimes \big(a_{i_{\sigma(r+1)}} \otimes \cdots \otimes a_{i_{\sigma(m)}}\big)
\end{displaymath}
where the sum runs on all $(r,m-r)$ shuffles $\sigma$. This formula corresponds to the comultiplication $\Delta$ on the tensor algebra $T(A)$ when we impose the elements $a\in A$ to be primitive (i.e. $\Delta(a) = a \otimes 1_{\field} + 1_{\field} \otimes a$) and $\Delta \colon T(A) \to T(A) \otimes T(A)$ to be a homomorphism of algebras. Expressions such as $\sum a^*_{I(1)} \otimes \abf_{I(2)}$ have the obvious meaning. The coassociativity of $\Delta$ justifies the notation
\begin{displaymath}
	\sum \abf_{I(1)} \otimes \cdots  \otimes \abf_{I(k)}
\end{displaymath}
when we apply $\Delta$ to $\abf_I$ $k-1$ times.

Shestakov-Umirbaev $p$-operations  on $(A,*)$ are defined recursively by the following fundamental formula
\begin{displaymath}
	(a^*_{I} * a^*_J) * a - a^*_I * (a^*_J * a) = \sum (a^*_{I(1)} * a^*_{J(1)}) * p(\abf_{I(2)}; \abf_{J(2)}; a)
\end{displaymath}
These operations induce a Sabinin algebra structure on $A$ by 
\begin{align*}
	\langle a,b \rangle &:= b*a- a*b\\
	\langle a_1,\dots, a_m; b,c  \rangle &: = p(a_1,\dots,a_m;c;b) - p(a_1,\dots, a_m;b;c) \\
	\Phi(a_1,\dots,a_m; b_1,\dots, b_{n+1}) &:=\\
	&\hskip -2cm  \frac{1}{m!(n+1)!} \sum_{\sigma \in S_{m}, \tau \in S_{n+1}} p(a_{\sigma(1)},\dots, a_{\sigma(m)}; b_{\tau(1)},\dots;b_{\tau(n+1)}).
\end{align*}
Observe that $\langle 1; a,b \rangle = 0$. To unify notation it is customary to set $\langle 1; a,b \rangle$ as $\langle a, b \rangle$. However, here \emph{we shall not follow that convention in Proposition~\ref{prop:brackets}}.

Let us specialize this construction to our context. Let  $A = \bigoplus_{n \in \Z} A_n$ be a graded unital associative algebra over $\field$ and define a new product on $A$ by 
 \begin{equation}\label{eq:prod_*}
 a_m * a_n := (m+1)a_m a_n.
 \end{equation}
 Given $I:=(i_1,\dots,i_m)$ and $a_{i_1}, \dots, a_{i_m} \in A$ ($a_{i_j} \in A_{i_j}$), in addition to (\ref{eq:tensors}),  let us define $a_I := a_{i_1}\cdots a_{i_m}$.
 
\begin{proposition} \label{prop:brackets}	 
Given $I:=(i_1,\dots, i_m)$ and $a_{i_1}, \dots, a_{i_m}, b,c \in A$ with $a_{i_j} \in A_{i_j}$ we have
	\begin{displaymath}
		\langle \abf_I; b,c \rangle = \sum_{k, l(I(1)),\dots, l(I(k)) \geq 1} (-1)^{k+1} \vert I(k) \vert N(I(1)) \cdots N(I(k))  a_{I(1)} \cdots a_{I(k)} [c,b]
	\end{displaymath}
	where $[c,b]:=cb-bc$, 
	 \begin{displaymath}
	 N(I):= (i_1 +1)(i_1+i_2+1)\cdots (i_1+\cdots + i_m+1)
	 \end{displaymath}
	 if $m \geq 1$ and  $N(I) := 1$ if $m = 0$.
\end{proposition}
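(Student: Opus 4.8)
The plan is to compute the Shestakov–Umirbaev $p$-operations directly for the product \eqref{eq:prod_*} and then feed the result into the formula $\langle\abf_I;b,c\rangle = p(\abf_I;c;b)-p(\abf_I;b;c)$. First I would record the two ingredients that make everything explicit: the identity $a^*_I = N(I)\,a_I$, proved by an easy induction on $l(I)$ using that left-multiplying a homogeneous element of degree $d$ by $a_m$ in the $*$-product introduces the factor $(m+1)$ while the degree passes from $d$ to $d+i_{k}$ — so the successive factors are exactly $(i_1+1)$, $(i_1+i_2+1)$, \dots, giving $N(I)$; and the associator in the associative product $ab$: since $a_I$ is associative, the only source of non-associativity in $*$ is the scalar $(m+1)$, so the left-hand side $(a^*_I * a^*_J)*a - a^*_I*(a^*_J*a)$, after substituting $a^*_I = N(I)a_I$, $a^*_J = N(J)a_J$, collapses to a multiple of a plain associative product $a_I a_J a$. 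I expect to get $(a^*_I*a^*_J)*a - a^*_I*(a^*_J*a) = N(I)N(J)\big((\vert I\vert+\vert J\vert+1) - (\vert J\vert+1)\big) a_I a_J a = N(I)N(J)\,\vert I\vert\, a_I a_J a$, although I should double-check the bookkeeping of which degree shift multiplies which block.

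Next I would solve the fundamental recursion
\begin{displaymath}
(a^*_I*a^*_J)*a - a^*_I*(a^*_J*a) = \sum (a^*_{I(1)}*a^*_{J(1)})*p(\abf_{I(2)};\abf_{J(2)};a)
\end{displaymath}
for $p$ by induction on $l(I)+l(J)$. Isolating the term with $I(2)=I$, $J(2)=J$ (so $I(1)=J(1)=\emptyset$, whose $*$-product is the unit $1$, and $(1)*p(\dots) = p(\dots)$ once one checks that $1$ really is a two-sided $*$-unit on this term) gives
\begin{displaymath}
p(\abf_I;\abf_J;a) = (a^*_I*a^*_J)*a - a^*_I*(a^*_J*a) - \sum{}' (a^*_{I(1)}*a^*_{J(1)})*p(\abf_{I(2)};\abf_{J(2)};a),
\end{displaymath}
where $\sum'$ omits the leading term. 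The key simplification is that we only ever need $p$ with a \emph{single} slot in the $J$-position, i.e. $J=(j)$ of length one: then the shuffle decomposition of $\abf_J$ has only the two trivial pieces, the recursion on the $J$-side terminates immediately, and — crucially — the degree-one computation above shows $p$ of a length-one $J$-slot is already a scalar times an associative monomial. So I would prove by induction that
\begin{displaymath}
p(\abf_I;b;a) = \sum_{k\ge 1}\ \sum_{\substack{l(I(1)),\dots,l(I(k))\ge 1}} (-1)^{k+1} N(I(1))\cdots N(I(k))\, a_{I(1)}\cdots a_{I(k)}\, b\, a,
\end{displaymath}
the sum being over ordered set-partitions of $I$ into $k$ nonempty blocks arising from iterating the coproduct. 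Granting this, $p(\abf_I;c;b)-p(\abf_I;b;c)$ replaces the trailing $\,b\,a$ (resp.\ $c\,b$) by $cb - bc = [c,b]$, producing exactly the claimed formula; the factor $\vert I(k)\vert$ in the statement is the degree shift $\vert I\vert$-type term that appeared in the base case and gets inherited by the \emph{last} block in each partition.

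The main obstacle is the combinatorial identity in the induction step: one must check that the ``ordered-partition-of-$I$ with $N(\cdot)$ weights'' ansatz is stable under the recursion, i.e.\ that subtracting $\sum'(a^*_{I(1)}*a^*_{J(1)})*p(\abf_{I(2)};\abf_{J(2)};a)$ from the associator $N(I)\vert I\vert a_I b a$ telescopes correctly. Concretely, the shuffle sum over $I(1),I(2)$ that appears on the right must be reorganized so that refining the block $I(2)$ (via the inductive formula for $p$) and then re-attaching $I(1)$ as a \emph{first} block reproduces every ordered partition with the right sign $(-1)^{k+1}$ and weight $N(I(1))N(I(2)_{\text{block }1})\cdots$; the sign alternation is what makes the over-counted partitions cancel. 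I would organize this as an identity among generating functions indexed by compositions of $I$, reducing it to the single scalar identity $N(I) = \sum (\text{over ways to cut }I\text{ into a first block and a rest})$ with appropriate signs — a standard inclusion–exclusion on the multiplicative structure of $N$. Uniqueness of the $p$-operations satisfying the fundamental formula (which holds since each $p(\abf_I;b;a)$ is determined by lower-length data) then guarantees the closed form is the right one.
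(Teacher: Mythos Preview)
Your plan is essentially the paper's: specialize the fundamental recursion to $J$ of length one, compute the left-hand side explicitly in terms of $N(I)$ and $|I|$, and invert by the alternating sum over ordered set-partitions of $I$. The paper streamlines this by taking the difference $p(\abf_I;c;b)-p(\abf_I;b;c)$ \emph{before} solving, obtaining directly the recursion
\[
|I|\,N(I)\,a_I[c,b]=\sum N(I(1))\,a_{I(1)}\,\langle \abf_{I(2)};b,c\rangle
\]
for the bracket itself, which it then inverts; you instead solve for $p$ first and subtract afterwards. Since the recursion is linear, the two orders are equivalent.

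However, your bookkeeping is off in ways that would derail the computation as written. The identity $a^*_I=N(I)\,a_I$ is false: already for $I=(i_1)$ one has $a^*_I=a_{i_1}$ while $N(I)=i_1+1$. The correct statement is $a^*_I * x = N(I)\,a_I\,x$ for any $x$ (equivalently $a^*_I=\tfrac{N(I)}{|I|+1}\,a_I$), because the final factor $(|I|+1)$ in $N(I)$ only appears once you $*$-multiply again. With this fix the associator with $J=(j)$ becomes $(a^*_I*b)*a-a^*_I*(b*a)=N(I)\,|I|\,a_I\,b\,a$, not $N(I)N(J)|I|\,a_Ia_Ja$ (your version carries a spurious factor $|J|+1$). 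Consequently the closed formula for $p(\abf_I;b;a)$ must include the factor $|I(k)|$ in each summand; you gesture at this (``inherited by the last block'') but the formula you display omits it, and without it the subtraction $p(\abf_I;c;b)-p(\abf_I;b;c)$ would yield $[c,b]$ rather than $|I(k)|[c,b]$. Once these scalars are corrected your argument goes through and matches the paper's.
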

\begin{proof}
	By  definition, $(a^*_I * c) * b - (a^*_I * b) * c = a^*_I* \langle b, c \rangle + \sum a^*_{I(1)}* \langle \abf_{I(2)}; b,c \rangle$. Hence
	\begin{align*}
		N(I) (i_1 &+ \cdots + i_m + \vert c \vert +1 ) a_I cb - 	N(I) (i_1+ \cdots + i_m + \vert b \vert +1 ) a_I bc \\
		&= N(I)(\vert c \vert + 1) a_Icb - N(I)(\vert b \vert +1) a_Ibc + \sum N(I(1)) a_{I(1)} \langle \abf_{I(2)}; b,c \rangle
	\end{align*}
	and we get
	\begin{displaymath}
		\vert I \vert N(I)  a_I[c,b] = \sum N(I(1)) a_{I(1)} \langle \abf_{I(2)}; b,c \rangle.
	\end{displaymath}
	Solving for $\langle \abf_I; b,c \rangle$ we obtain
	\begin{displaymath}
		 	\langle \abf_I; b,c \rangle = \sum_{k, l(I(1)),\dots, l(I(k)) \geq 1} (-1)^{k+1} \vert I(k) \vert N(I(1)) \cdots N(I(k))  a_{I(1)} \cdots a_{I(k)} [c,b].
	\end{displaymath}
\end{proof}

In Section~\ref{sec:N-sequence} we will use Proposition~\ref{prop:brackets} to identify the Sabinin brackets related to the natural filtration of $\Jg(\ring)$. Some brackets on $(A,*)$ are
	\begin{align*}
		\langle a, b \rangle &= (\vert b \vert + 1) ba - (\vert a \vert + 1) ab,\\
		\langle a_{i}; b,c \rangle &= i(i+1) a_i[c,b] \quad \text{and}\\
		\langle a_i, a_j ; b,c \rangle &= i(i+1)(i+2j+1) a_ia_j[c,b] - i(i+1)(j+1) a_ja_i[c,b].
	\end{align*}

\begin{example}
	Let $A:= \field[t,t^{-1}]$ be the algebra of Laurent polynomials with the natural $\Z$-grading. The only nonzero bracket $\langle -; -,- \rangle$ is the binary one, which is determined by
	\begin{displaymath}
	\langle t^i, t^j \rangle = (j-i) t^{i+j},
	\end{displaymath}
	the product of the Witt Lie algebra. \hfill$\square$
\end{example}

%
\subsection{Examples of nonnilpotent Sabinin algebras with trivial $n$-ary brackets.}\label{subsec:trivial_brackets}

 It is well known that the tangent algebras of local Lie groups are Lie algebras, i.e. Sabinin algebras for which all multilinear operations other than the binary product $\langle -, - \rangle$ vanish. With the help of Proposition~\ref{prop:brackets}, given $n \geq 1$, it is easy to construct Sabinin algebras that are not nilpotent but for which the brackets $\langle -;-,- \rangle$ of ariety $m$ vanish just if $m \geq n$ (or $m=0$ since $\langle a_1,\dots,a_m ; b,c \rangle = 0$ if $m=0$ unless we identify these brackets with $\langle b,c \rangle$). Loosely speaking, these examples are $n-1$ steps far from being Lie algebras. 

We first consider the associative $\field$-algebra $\field[e]$ generated by an element $e$ subject to the relation $e^{n+1} = e^n$. We also consider a $\field[e]$-bimodule $M$ with a basis $\{v^0,\dots, v^{n-1}\}$ and actions given by
\begin{displaymath}
ev^i := v^{i+1}\quad \text{and}\quad v^i e = v^i \quad (i=0,1,\dots, n-1)
\end{displaymath}
where $v^n:= 0$. Let $R:= \field[e] \oplus M$ be the split-null extension of $\field[e]$ by $M$. Thus $(\alpha + u)(\beta + v) = \alpha\beta + \alpha v + u \beta $ for any $\alpha, \beta \in \field[e]$ and $u,v \in M$. Since
\begin{displaymath}
[e,v^i] = v^{i+1}-v^i
\end{displaymath}
then $[R,R] = M$. Thus
\begin{equation}\label{eq:null-n}
R^n[R,R] = 0.
\end{equation}
Let $\ring_i:= R$ be a copy of $R$ for $i=-n,\dots, 0$ and $\ring:= \oplus_{i=-n}^0 \ring_i$. To distinguish elements, given $a,b,\dots \in R$, we use subindexes $a_i, b_j,\dots$ to indicate the component $\ring_i, \ring_j,\dots$ to which they belong. We will also assume that $\ring_k = 0$ if $k \not\in \{ -n,\dots, 0\}$. There are two natural products on $\ring$. On the one hand, we can define the product $a_i \beta_j := (\alpha\beta)_{i+j}$ on $\ring$ to obtain an associative $\Z$-graded algebra; on the other hand, we can consider the product $\alpha_i * \beta_j := (i+1)(\alpha\beta)_{i+j}$. By Proposition~\ref{prop:brackets} and (\ref{eq:null-n}) we have
\begin{displaymath}
\langle \underbrace{\ring,\ring,\dots, \ring}_m; \ring, \ring \rangle =0 \quad \text{if } m \geq n. 
\end{displaymath}
However, if $1 \leq m < n$ then
\begin{equation}\label{eq:nonzero_bracket}
\langle \underbrace{e_{-2},e_{-1},\dots, e_{-1}}_m; \ring_0,\ring_0 \rangle \neq 0
\end{equation}
To compute this expression we look at the sum in Proposition~\ref{prop:brackets} with $I=(-2,-1,\dots, -1)$ in accordance to (\ref{eq:nonzero_bracket}). If $k \geq 2$ then in the corresponding summand appears at least one $I(i)$ of the form $I(i)=(-1,\dots,-1)$. Since $N(I(i)) = 0$ then that summand vanishes. Therefore, 
\begin{align*}
\langle e_{-2},e_{-1},\dots, e_{-1}; \ring_0,\ring_0 \rangle &= \vert I \vert N(I) e^m_{-m-1}[\ring_0,\ring_0] \\
&= (-1)^{m+1} (m+1)! (e^m M)_{-m-1} \neq 0.
\end{align*}
Moreover, 
\begin{displaymath}
\langle \ring_0,\langle \ring_0,\dots,\langle \ring_0,\ring_0 \rangle\rangle \rangle = [R,[R,\dots,[R,R]]] = M
\end{displaymath}
implies that the Sabinin algebra is not nilpotent. 

%
\section{The N-sequence $\Jg_1(\ring) \supseteq \Jg_2(\ring) \supseteq \cdots$  and its Sabinin brackets}
\label{sec:N-sequence}

%
\subsection{The N-sequence $\Jg_1(\ring) \supseteq \Jg_2(\ring) \supseteq \cdots$.}

The loop $(\Jg(\ring),\circ)$ has a natural filtration $\Jg_1(\ring) \supseteq \Jg_2(\ring) \supseteq \cdots$ where
\begin{equation}\label{eq:J_i}
\Jg_i(\ring):=\left\{ 1 +  \sum_{k \geq i } \alpha_k \mid \alpha_k \in \ring_k \right\}  \quad (i \geq 1).
\end{equation}
Clearly $\Jg_i(\ring)$ is the kernel of the natural projection $\Jg(\ring) \rightarrow \Jg(\ring/I)$ where $I$ is the ideal $ I:= \ring_{i} \oplus \ring_{i+1} \oplus \cdots$ of $\ring$, therefore it is a normal subloop.  Modulo $\Jg_{i+1}(\ring)$ we have $$(1 + \alpha_i) \circ (1 + \beta_i) \equiv 1 + \alpha_i + \beta_i,$$ so  $(\Jg_i(\ring)/\Jg_{i+1}(\ring), \circ)$ is an abelian group isomorphic to $(\ring_i,+)$. Even more, $$\xi (1 + \alpha_i )\Jg_{i+1}(\ring) := (1 + \xi \alpha_i )\Jg_{i+1}(\ring)$$ defines a $\field$-vector space structure on $\s_i:= \Jg_i(\ring)/\Jg_{i+1}(\ring)$ isomorphic to $\ring_i$. We will prove that this filtration is an N-sequence and that $\s:= \oplus_{i\geq 1} \s_i$ is a Sabinin algebra with the brackets defined in (\ref{eq:J_brackets}). Moreover, we will show that with the identification 
\begin{equation}\label{eq:identification}
\s = \bigoplus_{i\geq 1} \Jg_i(\ring)/\Jg_{i+1}(\ring) \cong \bigoplus_{i\geq 1} \ring_i = \ring
\end{equation}
these brackets are given by the formula in Proposition~\ref{prop:brackets}. To this end, there is no loss of generality in assuming that $\ring$ is the (nonunital) associative algebra $\coef$ freely generated by 
$$\Omega := \{ \alpha_{i,k} \mid i, k \geq 1\} \cup \{ \alpha_k, \beta_k, \gamma_k, \dots \mid k \geq 1\}.$$ 
$\coef$ is a graded algebra with the gradation determined by 
$$\vert \alpha_{i,k}\vert := \vert \alpha_k \vert := \vert \beta_k\vert := \cdots := k.$$ 
We will identify the vector spaces $\Jg_i/\Jg_{i+1}$ and $\coef_i$, where $\Jg_i$ stands for $\Jg_i(\coef)$. We will consider consider $\coef^\sharp:= \field 1 \oplus \coef$ so that $(\coef^\sharp, ab)$ is the unital closure of $(\coef,ab)$ and $(\coef^\sharp, a*b)$ is the unital closure of $(\coef,a*b)$, where $*$ is the product defined in (\ref{eq:prod_*}). Be aware that $\alpha_i * \beta_j = (i+1)\alpha_i\beta_j$ but $\alpha_i*1 = \alpha_i$.

Given  $w = 1 + \sum_{i \geq 1} w_i$ with $w_i \in \coef_i$,  the \emph{depth} of $w$ is 
$$d(w) := \min \{ i \geq 1 \mid w_i \neq 0 \}$$
for  $w \neq 1$ and $d(1) := \infty$. Given $\alpha \in \Omega$ and $w \in \Jg(\coef)$, we can evaluate $w$ at $\alpha = 0$ to obtain a new element $w \vert_{\alpha \rightarrow 0} \in \Jg(\coef)$. The \emph{support} of $w \in \Jg(\coef)$ is $$\supp(w):= \{ \alpha \in \Omega \mid w \neq w\vert_{\alpha \rightarrow 0}\}.$$
Let $\fl$ be the loop freely generated by $x,y,z$ and $ x_i,y_i,z_i$ ($i=1,2,\dots$) with unit element $e$. We say that $w(x_1,\dots,x_m) \in \fl$ is \emph{balanced} with respect to $x_i$ if $w(x_1,\dots,x_m)\vert_{x_i \rightarrow e}:= w(x_1,\dots, x_{i-1},e,x_{i+1},\dots,x_m) = e$.  We say that $w(x_1,\dots,x_m)$ is \emph{balanced} if it is balanced with respect to $x_i$ for all $i=1,\dots,m$.
\begin{proposition}\label{prop:offset}
	Let $w(x_1,\dots,x_m) \in \fl$ be balanced and $a_i = 1 + \sum_{j \geq d(a_i)} \alpha_{i, j} \in \Jg(\coef)$, ($i = 1, \dots, m$). Then 
	\begin{displaymath}
	d(w(a_1,\dots, a_m)) \geq d(a_1) + \cdots + d(a_m).
	\end{displaymath}
	Moreover, the component of degree $d(a_1) + \cdots + d(a_m)$ of $w(a_1,\dots,a_m)$ is a multilinear expression in $\{ \alpha_{1,d(a_1)},\dots, \alpha_{m,d(a_m)} \}$.
\end{proposition}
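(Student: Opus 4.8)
The plan is to prove both assertions simultaneously by induction on the structure of the balanced loop word $w$, tracking two invariants: the lower bound on the depth, and the multilinearity of the leading component. First I would make the setup precise. Since $w(x_1,\dots,x_m)$ is a word in the free loop $\fl$ with the four loop operations (product, left division, right division, and unit), and it is balanced with respect to each $x_i$, I would regard $w$ as built up from the generators $x_1,\dots,x_m$ by repeated application of $\circ$, $\backslash$, $/$. The base case is a single generator $x_i$ (here $w(a_1,\dots,a_m)=a_i$, so $d(w(a_i))=d(a_i)$ and the leading component is $\alpha_{i,d(a_i)}$, which is linear), but note that a single generator is \emph{not} balanced unless $m=1$; so really the base case to analyze is the smallest nontrivial balanced word, and I would instead run the induction on the number of operations in $w$, using at each step the key structural fact recorded earlier in the paper, namely that for $a,b\in\Jg_i(\coef)$ one has $(1+\alpha_i)\circ(1+\beta_i)\equiv 1+\alpha_i+\beta_i \pmod{\Jg_{i+1}}$, and more generally that the leading behaviour of $\circ$, $\backslash$, $/$ is governed by the product $*$ of Proposition~\ref{prop:brackets} up to higher-degree corrections.

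The core of the argument is a depth-additivity estimate for each loop operation. I would first establish: if $u,v\in\Jg(\coef)$ then $d(u\circ v)\ge\min(d(u),d(v))$, with the leading component of $u\circ v$ in degree $\min(d(u),d(v))$ being $u_{d}+v_{d}$ when $d(u)=d(v)=d$ (and just $u_{d(u)}$ or $v_{d(v)}$ otherwise, from the formula for $\gamma_k$ in \eqref{eq:gamma}); the analogous statements for $u\backslash v$ and $u/v$ follow from bijectivity of the multiplication operators together with the explicit recursion for the inverse. The point is that $\circ$, $\backslash$, $/$ all behave \emph{additively} on leading terms. Now the balancedness hypothesis is exactly what prevents the leading terms from cancelling in a harmful way: since $w$ restricted to $x_i\to e$ is trivial, every occurrence of $x_i$ in a ``leading position'' must be matched, and I would show by induction that each $a_i$ contributes its depth $d(a_i)$ \emph{additively} to $d(w(a_1,\dots,a_m))$. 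Concretely, writing $w = w_1 \,\square\, w_2$ for one of the three operations $\square$, each $w_k$ is balanced in the variables it actually involves (any variable appearing in $w_1$ but not $w_2$, say, must already make $w_1$ balanced in it because $w$ is), so the inductive hypothesis applies to each $w_k$, and the depth estimate for $\square$ gives $d(w)\ge d(w_1)+\cdots$; summing the contributions over a spanning choice of subwords yields $d(w)\ge\sum_i d(a_i)$.

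For the multilinearity of the leading component, I would argue that in degree exactly $\sum_i d(a_i)$ the only surviving contributions to $w(a_1,\dots,a_m)$ are those in which, for each $i$, precisely the coefficient $\alpha_{i,d(a_i)}$ is used and nothing else from $a_i$ (any higher term $\alpha_{i,j}$ with $j>d(a_i)$, or two terms from the same $a_i$, would push the degree strictly above $\sum_i d(a_i)$, by the additive depth estimate). Thus the leading component is a sum of words each of which is linear in each $\alpha_{i,d(a_i)}$, i.e. multilinear in $\{\alpha_{1,d(a_1)},\dots,\alpha_{m,d(a_m)}\}$. I expect the main obstacle to be bookkeeping for the division operations: unlike $\circ$, the maps $u\backslash v$ and $u/v$ are defined implicitly, so I must extract their leading behaviour from the recursive solvability in the proof of Proposition~\ref{prop:loop}, showing that if $v$ has depth $d$ then $u\backslash v$ has depth $\ge\min(d(u),d(v))$ and that, modulo higher degree, $\backslash$ is the inverse of $\circ$ in the additive sense; once that lemma is in hand, the induction closes routinely. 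A secondary subtlety is to state the inductive hypothesis strongly enough that it survives composition — in particular one wants the leading component of $w_k(a_1,\dots)$ to be \emph{nonzero in general}, or at least to track it symbolically, so that additivity of depths is genuinely exact rather than merely an inequality; handling this cleanly (perhaps by passing to the free setting where no accidental cancellations occur) is the one place where care is needed.
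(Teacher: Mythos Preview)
Your structural induction has a genuine gap: the claim that subwords of a balanced word are themselves balanced (in the variables they involve) is false, and without it the depth estimate collapses to a minimum rather than a sum. Take the commutator $[x_1,x_2]=(x_2\circ x_1)\backslash(x_1\circ x_2)$: both variables occur in both subwords $w_1=x_2\circ x_1$ and $w_2=x_1\circ x_2$, and neither $w_k$ is balanced in either variable (e.g.\ $w_1\vert_{x_1\to e}=x_2\neq e$). The same happens for the associator $(x_1,x_2,x_3)=(x_1\circ(x_2\circ x_3))\backslash((x_1\circ x_2)\circ x_3)$. So the inductive hypothesis is lost immediately, and your operation-by-operation estimate $d(u\,\square\,v)\geq\min(d(u),d(v))$ never produces the additive bound $\sum_i d(a_i)$ you need. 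The difficulty you flagged as a ``secondary subtlety'' is in fact the whole problem.

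The paper bypasses induction entirely with a two-line support argument that exploits the \emph{freeness} of $\coef$. Since $w$ is balanced, evaluating $w(a_1,\dots,a_m)$ at $a_i\to 1$ (equivalently, sending every generator $\alpha_{i,j}$ to $0$) yields $1$; hence every nonzero homogeneous component $w_k$ of $w(a_1,\dots,a_m)-1$ must contain, for each $i$, at least one generator from $\{\alpha_{i,j}:j\geq d(a_i)\}$ in its support. Because the generators are free and $\vert\alpha_{i,j}\vert=j\geq d(a_i)$, this forces $k\geq\sum_i d(a_i)$, and in degree exactly $\sum_i d(a_i)$ each $\alpha_{i,d(a_i)}$ must appear to degree exactly one, giving multilinearity. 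No analysis of the individual loop operations or their implicit inverses is needed.
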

\begin{proof}
	Since $w(x_1,\dots,x_m)$ is balanced then $w(a_1,\dots,a_m)\vert_{a_i \rightarrow 1} = 1$. Thus,  in the expansion $w(a_1,\dots,a_m) = 1 +  \sum_{k \geq 1} w_k $ with $w_k \in \coef_k$ the support of the first nonzero $w_k$ must contain at least one element in each of the sets $\{\alpha_{i,j} \mid j\geq d(a_i)\}$ $i=1,\dots,m$. This proves that  $d(w(a_1,\dots, a_m)) \geq d(a_1) + \cdots + d(a_m)$ and that the component of degree $d(a_1) + \cdots + d(a_m)$ of $w(a_1,\dots,a_m)$ is either zero or a homogeneous polynomial of degree one in each generator $\{ \alpha_{1,d(1)},\dots, \alpha_{m,d(m)} \}$.
\end{proof}

For any word $w(x_1,\dots, x_m)$ in the free loop $\fl$ we consider the deviation
\begin{displaymath}
w'_i(x_1,\dots, x_{i-1},y,z,x_{i+1},\dots, x_m) := (w\vert_{x_i \rightarrow y} w\vert_{x_{i+1} \rightarrow z})\backslash w\vert_{x_i \rightarrow yz}.
\end{displaymath}
If $w$ is balanced then  $w'_i$ is balanced too. Since the commutator $[x_1,x_2]$ and the associator $(x_1,x_2,x_3)$ are balanced then all brackets that we can obtain from them and the deviations of $(x_1,x_2,x_3)$ are     balanced too. Thus, Proposition~\ref{prop:offset} implies
\begin{corollary}
	The filtration $\Jg_1(\ring) \supset \Jg_2(\ring) \supset \cdots $ is an N-sequence.
\end{corollary}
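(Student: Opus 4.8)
The plan is to reduce the N-sequence property entirely to Proposition~\ref{prop:offset}, by checking that every bracket of weight $n$ in the sense of the commutator-associator filtration is realized by a \emph{balanced} word in the free loop $\fl$. First I would recall that an N-sequence is, by definition, a filtration $\Jg_1(\ring) \supseteq \Jg_2(\ring) \supseteq \cdots$ by normal subloops such that for any bracket $P(a_1,\dots,a_n)$ of weight $n$ and any indices $i_1,\dots,i_n$ we have $P(\Jg_{i_1}(\ring),\dots,\Jg_{i_n}(\ring)) \subseteq \Jg_{i_1+\cdots+i_n}(\ring)$. Normality has already been established: each $\Jg_i(\ring)$ is the kernel of the projection $\Jg(\ring)\to\Jg(\ring/(\ring_i\oplus\ring_{i+1}\oplus\cdots))$. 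So the only thing left is the inclusion for brackets.

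Next I would argue that it suffices to treat the free case $\ring = \coef$. Indeed, any bracket $P$ of weight $n$ is, as a loop word, the image of a fixed word $w_P(x_1,\dots,x_n)\in\fl$ under the loop homomorphism $\fl\to\Jg(\ring)$ sending $x_j$ to a chosen element $a_j\in\Jg_{i_j}(\ring)$; and a universal statement about which homogeneous component first appears is detected already on the free graded algebra $\coef$ with its distinguished generators $\alpha_{j,k}$, because any $a_j$ with $d(a_j)=i_j$ is the image of $1+\sum_{k\ge i_j}\alpha_{j,k}$ under a graded homomorphism. (More simply: it is enough to prove $d\bigl(P(a_1,\dots,a_n)\bigr)\ge i_1+\cdots+i_n$ whenever $d(a_j)\ge i_j$, since $\Jg_m(\ring)=\{w\mid d(w)\ge m\}$.) Then the core step is: the commutator $[x_1,x_2]=(x_2\circ x_1)\backslash(x_1\circ x_2)$ and the associator $(x_1,x_2,x_3)$ are balanced words in $\fl$ — setting any argument to $e$ makes the expression collapse to $e$, which one checks directly from the defining formulas — and the deviation operation $w\mapsto w'_i$ introduced just above sends balanced words to balanced words. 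Hence, by induction on the construction of brackets from commutators, associators and deviations, every bracket of weight $n$ is given by a balanced word in $\fl$. Applying Proposition~\ref{prop:offset} to that balanced word with $a_j\in\Jg_{i_j}(\coef)$ gives $d\bigl(P(a_1,\dots,a_n)\bigr)\ge d(a_1)+\cdots+d(a_n)\ge i_1+\cdots+i_n$, i.e. $P(a_1,\dots,a_n)\in\Jg_{i_1+\cdots+i_n}(\coef)$, and transporting along the evaluation homomorphism yields the same for arbitrary $\ring$.

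The only genuinely delicate point — and it is the one the excerpt has already dispatched in the paragraph preceding the Corollary — is the stability of balancedness under deviations: one must verify that if $w$ is balanced with respect to every variable then so is $w'_i(x_1,\dots,x_{i-1},y,z,x_{i+1},\dots,x_m)=(w|_{x_i\to y}\,w|_{x_{i+1}\to z})\backslash w|_{x_i\to yz}$, including balancedness in the two new variables $y,z$. Setting $y\to e$ (or $z\to e$) forces $w|_{x_i\to y}\to e$ and $w|_{x_i\to yz}\to w|_{x_{i+1}\to z}$ by balancedness of $w$ in $x_i$, so $w'_i\to e\backslash w|_{x_{i+1}\to z}=w|_{x_{i+1}\to z}$; one then needs this to be $e$, which is exactly where balancedness of $w$ in $x_{i+1}$ (and in the remaining variables) is used, together with $e\backslash w = w$ and the loop identities \eqref{eq:loop}. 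Once this is in hand the proof is a one-line appeal to Proposition~\ref{prop:offset}; there is no further obstacle.
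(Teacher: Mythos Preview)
Your proposal is correct and follows exactly the paper's approach: every bracket is a balanced loop word (the commutator and associator are balanced, and the paragraph preceding the Corollary has just observed that deviations preserve balancedness), whence Proposition~\ref{prop:offset} yields the depth inequality $d(P(a_1,\dots,a_n))\ge d(a_1)+\cdots+d(a_n)$ that is precisely the N-sequence condition. Your closing verification gets tangled only because you inherit the paper's apparent misprint in the definition of $w'_i$ --- the middle factor should read $w\vert_{x_i\to z}$ rather than $w\vert_{x_{i+1}\to z}$ --- and with that correction, setting $y\to e$ gives $(e\cdot w\vert_{x_i\to z})\backslash w\vert_{x_i\to z}=e$ directly from the loop axioms, with no further appeal to balancedness in other variables needed.
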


%
\subsection{The Sabinin brackets of the N-sequence $\Jg_1(\ring) \supseteq \Jg_2(\ring) \supseteq \cdots$.}

Proposition~\ref{prop:offset} implies that modulo $\Jg_{d(a_1)+\cdots + d(a_{i-1}) + d(a_{i+1}) +\cdots + d(a_m) + d(a) + d(b)}(\ring)$
\begin{displaymath}
w(a_1,\dots,a_m)\vert_{a_i \rightarrow a \circ b} \equiv w(a_1,\dots, a_m)\vert_{a_i \rightarrow a}\circ w(a_1,\dots,a_m)\vert_{a_i \rightarrow b}
\end{displaymath}
for any balanced word $w(x_1,\dots,x_m) \in \fl$ and any $a_1,\dots, a_m, a,b \in \coef$. Even more, taking into account the multilinearity  of the component of degree $d(a_1) + \cdots + d(a_m)$ of $w(a_1,\dots,a_m)$ we observe that, using the identification (\ref{eq:identification}),  any balanced word $w(x_1, \dots, x_m)$ induces a multilinear operation 
\begin{displaymath}
\coef_{i_1} \otimes_{\field} \cdots \otimes_{\field} \coef_{i_m}  \rightarrow \coef_{i_1 + \cdots + i_m}
\end{displaymath}
and a graded multilinear operation
\begin{displaymath}
\coef \otimes_{\field} \cdots \otimes_{\field} \coef \rightarrow \coef.
\end{displaymath}
Since the deviations $P_{n,m}(x_1,\dots,x_n; y_1, \dots, y_m;z)$ and the commutator are balanced, the induced operations $p_{n,m}$ and the corresponding brackets $\langle -;-,-\rangle$ in (\ref{eq:J_brackets}) define multilinear operations on $\coef$. 

\begin{theorem}\label{prop:Sabinin_J}
	Let $\ring = \bigoplus_{i\geq 1} \ring_i$ be a graded associative algebra. Then the Sabinin brackets $\langle -;-,-\rangle$ determined by the N-sequence $\Jg_1(\ring) \supseteq \Jg_2(\ring) \supseteq \cdots $ are given by Proposition~\ref{prop:brackets}.
\end{theorem}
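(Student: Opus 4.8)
The plan is to reduce the statement to a purely algebraic identity in the free graded algebra $\coef$ and then invoke Proposition~\ref{prop:brackets} via the bridge provided by \cite{MoPe10}. First I would observe that, by the discussion preceding the theorem, it suffices to treat the universal case $\ring = \coef$, since any graded associative $\ring$ is a quotient of a free one by a homogeneous ideal, and all the operations in sight (commutators, associators, deviations, and hence $p_{n,m}$ and $\langle -;-,-\rangle$) are defined by balanced words and thus commute with graded algebra homomorphisms. So the task becomes: compute the operation on $\coef$ induced by the balanced word $P_{n,1}(x_1,\dots,x_n;z;y)$ composed with the commutator as in (\ref{eq:J_brackets}), using the identification (\ref{eq:identification}).

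The key step is to compare two loop structures on $\hat{\coef}$: the loop $(\Jg(\coef),\circ)$ transported to $(\hat{\coef}, a\bullet b)$ with $a\bullet b := b + a\circ(1+b)$, and the loop attached to the algebra $(\coef,*)$ with product $(\alpha,\beta)\mapsto \alpha+\beta+\alpha*\beta$. By Proposition~\ref{prop:differential} the differentials of the left translations of these two loops agree at the identity. The filtration $\Jg_i(\coef)$ corresponds to the filtration of $(\hat\coef,*)$ by the subspaces $\bigoplus_{k\ge i}\coef_k$, and modulo $\Jg_{i+1}$ both products reduce to addition. What I would argue is that the N-sequence Sabinin brackets of $(\Jg(\coef),\circ)$ coincide with those of $(\coef,*)$: concretely, for a balanced word $w$ of weight $n$ and homogeneous inputs $a_{i_j}\in\coef_{i_j}$, the degree-$(i_1+\cdots+i_n)$ component of $w(a_1,\dots,a_n)$ computed with $\circ$ equals the one computed with $*$. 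The cleanest route is induction on $n$ using the linearity of $a\bullet b - b$ in $a$ (noted just before the theorem): the deviation $w'_i$ replaces one argument $x_i$ by $yz$ and divides by $w|_{x_i\to y}\circ w|_{x_{i+1}\to z}$, and because only the top-degree, multilinear part survives (Proposition~\ref{prop:offset}), the difference between $\circ$ and $*$ is pushed into strictly higher filtration and disappears. Once this identification is in place, \cite{MoPe10} says the N-sequence Sabinin brackets of $(\coef,*)$ are exactly the Shestakov–Umirbaev brackets of the algebra $(\coef,*)$, which Proposition~\ref{prop:brackets} evaluates explicitly.

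More precisely I would spell out the following chain: (i) the graded multilinear operation induced on $\coef$ by the word $P_{n,1}$ agrees with the operation $p_{n,1}$ of \cite{Mo08} computed in the loop around $0$ with product $x+y+x*y$ — this is where Proposition~\ref{prop:differential} and the top-degree-multilinearity of Proposition~\ref{prop:offset} are used to pass from $\circ$ to $*$; (ii) by \cite{MoPe10} these operations, and hence the brackets $\langle x_1,\dots,x_n;y,z\rangle$ of (\ref{eq:J_brackets}), coincide with the Shestakov–Umirbaev brackets $\langle \abf_I;b,c\rangle$ on $(\coef,*)$; (iii) Proposition~\ref{prop:brackets} gives the closed form for the latter. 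Combining (i)–(iii) yields the statement.

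The main obstacle is step (i): making rigorous that the higher-degree discrepancy between $a\circ b$ and $a*b$ does not contaminate the top-degree component of an arbitrary iterated balanced word. The honest way to handle it is to note that $a\bullet b$ and $a*b'':=a+b+a*b$ are two loop products on $\hat\coef$ agreeing to first order at $0$ in the sense of Proposition~\ref{prop:differential}, that both filtrations $\Jg_i$ are N-sequences for their respective loops with associated graded $\coef$, and that the induced $n$-ary operation $\coef_{i_1}\otimes\cdots\otimes\coef_{i_n}\to\coef_{i_1+\cdots+i_n}$ depends only on the $1$-jet data entering the recursive definition of the Sabinin/Mostovoy operations — a statement one proves by induction on weight, at each stage using that a deviation $w'_i$ is again balanced so that by Proposition~\ref{prop:offset} its lowest nonzero component lands in the expected degree and is multilinear, while the difference $(a\bullet b) - (a*b'')$ lies in filtration degree strictly above $d(a)+d(b)$ and therefore contributes nothing. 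Everything else is bookkeeping with the explicit recursion of Proposition~\ref{prop:brackets}.
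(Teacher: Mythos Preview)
Your route is genuinely different from the paper's, and the overall architecture is reasonable, but step (ii) hides a real gap. As the paper itself stresses just after Proposition~\ref{prop:differential}, $\Jg(\ring)$ carries no analytic structure, so the brackets at stake are the N-sequence ones of (\ref{eq:J_brackets}), not the geometric tangent brackets. The result you quote from \cite{MoPe10} identifies the \emph{tangent} Sabinin brackets of the formal loop $x+y+x*y$ with the Shestakov--Umirbaev brackets of $(\coef,*)$; it does not, as stated, say anything about the operations induced on the associated graded of an N-sequence. You would still need to prove that for the loop $x+y+x*y$ the N-sequence operations $p_{n,1}$ coincide with the Shestakov--Umirbaev $p$-operations --- essentially the same recurrence check the paper carries out --- so invoking \cite{MoPe10} does not actually save you that work. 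Your step (i) is also only sketched: the inductive claim that the top multilinear component of an arbitrary iterated balanced word is insensitive to the higher-order discrepancy between $\circ$ and $*$ is plausible, but the induction has to pass through left and right divisions as well as products, and you have not argued that.

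The paper sidesteps both issues by a direct computation. It first collapses the deviation word: using that $P_{m-1,1}$ is balanced, one shows $P_{m,1}(a_1,\dots,a_m;b;c)\equiv_{\Omega'}(a,b,c)$ with $a=((a_1\circ a_2)\cdots)\circ a_m$. Then it expands $(a\circ b)\circ c$ explicitly, writing $a$ as $1+\sum_{\emptyset\neq I'\subseteq I}\alpha^*_{I'}$ plus higher terms, and reads off that
\[
(\alpha^*_I*\beta_j)*\gamma_k-(\alpha^*_I*\gamma_k)*\beta_j-\alpha^*_I*(\beta_j*\gamma_k-\gamma_k*\beta_j)
\equiv_{\Omega'}\sum_{I'\subseteq I}\alpha^*_{I'}*\langle\alpha^\otimes_{I\setminus I'};\beta_j,\gamma_k\rangle,
\]
which is exactly the Shestakov--Umirbaev recurrence, so Proposition~\ref{prop:brackets} applies. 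This is more hands-on than your proposal but entirely self-contained: it never compares two loop structures and never appeals to \cite{MoPe10}. Your approach, if the two gaps above were closed, would give a more conceptual explanation of \emph{why} the answer is the Shestakov--Umirbaev bracket; the paper's approach gives a shorter, verification-style proof.
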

\begin{proof}
First, observe again that there is no loss of generality in assuming that $\ring = \coef$. We will also need some notation. Let $I:=(i_1,\dots,i_m)$, $a_1 := 1 + \alpha_{1,i_1}, \dots, a_m:= 1 + \alpha_{m,i_m}$, $a:= ((a_1 \circ a_2)\cdots )\circ a_m$, $a^*_I:=((a_1 * a_2)\cdots )* a_m$, $\alpha^*_I:= ((\alpha_{1,i_1}*\alpha_{2,i_2})\cdots)* \alpha_{m,i_m}$, $b:= 1 + \beta_j$, $c:= 1 + \gamma_k$. The notation $I' \subseteq I$ means that $I'$ is a subsequence (possibly empty) of $I$. The complementary sequence of $I'$ is denoted by $I \setminus I'$. 

The operation $p_{n,1}$ on $\coef$ induced by $P_{n,1}(x_1,\dots, x_n; y;z)$ is determined by the component of degree $i_1+ \cdots + i_n+j+k$ of $P_{n,1}(a_1,\dots, a_n; b; c)$. By Proposition~\ref{prop:offset} this component is precisely the component of  $P_{n,1}(a_1,\dots, a_n; b; c)$  of multidegree $(1,\dots, 1)$ on $\Omega' := \{\alpha_{1,i_1},\dots, \alpha_{m,i_m},\beta_j,\gamma_k \}$. We will use the symbol $\equiv_{\Omega'}$ to indicate that this component is the same for two given elements.
 
Since $P(x_1,\dots, x_n;y;z)$ is balanced with respect to $z$ we have
\begin{align*}
P_{m-1,1}(& a_1 \circ a_2, a_3,\dots, a_m; b,c) \\
& \quad = (P_{m-1,1}(a_1,a_3,\dots, a_m;b;c) \circ P_{m-1,1}(a_2,a_3,\dots, a_m;b;c))\\
& \quad\quad \circ P_{m,1}(a_1,a_2,a_3,\dots, a_m;b;c) \\
& \quad \equiv_{\Omega'} P_{m-1,1}(a_1,a_3,\dots, a_m;b;c) +  P_{m-1,1}(a_2,a_3,\dots, a_m;b;c) - 1\\
& \quad\quad + P_{m,1}(a_1,a_2,a_3,\dots, a_m;b;c) -1 \\
& \quad \equiv_{\Omega'} P_{m,1}(a_1,a_2,a_3,\dots, a_m;b;c).
\end{align*}
Iterating this observation we get
\begin{displaymath}
P_{m,1}(a_1,\dots, a_m;b;c) \equiv_{\Omega'} (a,b,c) = (a\circ(b\circ c)) \backslash ((a \circ b) \circ c).
\end{displaymath}
Since $(a \circ b) \circ c = (a \circ (b \circ c)) \circ (a,b,c)$ then
\begin{align*}
	(\alpha^*_I &* \beta_j)*\gamma_k \equiv_{\Omega'} (a \circ b) \circ c =  (a \circ (b \circ c)) \circ (a,b,c)\\
	 & \equiv_{\Omega'} \left(\left(1 + \sum_{\emptyset \neq I' \subseteq I} \alpha^*_{I'}\right) \circ (1 + \beta_j + \gamma_k + \beta_j * \gamma_k) \right) \circ (a,b,c)\\
	  & \equiv_{\Omega'} \left( 1 + \sum_{\emptyset \neq I' \subseteq I} \alpha^*_{I'} + \alpha^*_{I'}*(\beta_j * \gamma_k) + \binom{\vert I' \vert + 1}{2} \alpha^*_{I'} (\beta_j \gamma_k + \gamma_k \beta_j)\right)\circ (a,b,c).
\end{align*}
Therefore,
\begin{align*}
	(\alpha^*_I &* \beta_j)*\gamma_k - (\alpha^*_I * \gamma_k)* \beta_j - \alpha^*_I *(\beta_j* \gamma_k - \gamma_k*\beta_j) \\
	&  \equiv_{\Omega'} \left( 1 + \sum_{\emptyset \neq I' \subseteq I} \alpha^*_{I'}  \right)\circ (a,b,c) - \left( 1 + \sum_{\emptyset \neq I' \subseteq I} \alpha^*_{I'}  \right)\circ (a,c,b)\\
	&  \equiv_{\Omega'}\sum_{\substack{I' \subseteq I \\ I''= I \setminus I'}} \alpha^*_{I'}  * \left( (a_{I''},b,c) - (a_{I''},c,b) \right)  \equiv_{\Omega'} \sum_{\substack{I' \subseteq I \\ I''= I \setminus I'}} \alpha^*_{I'}  * \langle \alpha^\otimes_{I''}; \beta_j,\gamma_k \rangle
\end{align*}
where $\alpha^*_{I'} = 1$ if $I' = \emptyset$ and $\alpha^\otimes_I := \alpha_{1,i_1} \otimes \cdots \otimes \alpha_{m,i_m}$. This shows that the operations $\langle \alpha_{i_1},\dots, \alpha_{i_n}; \beta_j, \gamma_k \rangle$ satisfy the recurrence that characterizes the Shestakov-Umirbaev brackets of $(\coef^\sharp,*)$. Therefore, they are given by Proposition~\ref{prop:brackets}. Finally, it is easy to prove that the commutator $[y,z]$ induces  $b*c-c*b$. This concludes the proof.
\end{proof}

%
\section{Normal subloops}

Let $R$ be a unital associative  algebra and $\J:=\J(R) := \{t + \sum_{i\geq 1} \alpha_i t^{i+1} \mid \alpha_i \in R\}$ the loop of formal power series with coefficients in $R$ under substitution. For commutative $R$, the normal subgroups of $\J(R)$ have been studied in \cite{Kl00} and, as we will observe, the proofs remain valid in a nonassociative setting.

Let us define $\fh_n := \{ \alpha_n \in R  \mid t + \alpha_n t^{n+1} \in H \J_{n+1}\}$ for any $H \subseteq \J$.

\begin{lemma}
Let $H$ be a subloop of $\J$ with $[\J,H] \subseteq H$. Then 
\begin{enumerate}
	\item $\fh_{n+1} \subseteq \fh_{n+2}$ for all $n \geq 1$ and $\fh_1 \subseteq \fh_3$.
	\item $R\fh_nR \subseteq \fh_{n+2}$.
\end{enumerate}
\end{lemma}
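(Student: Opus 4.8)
The plan is to exploit the N-sequence property established in the previous section together with the specific form of the commutator and associator brackets, evaluated on generators of the shape $t+\alpha t^{n+1}$. First I would record the basic computational facts: modulo $\J_{n+1}$ the element $t+\alpha_n t^{n+1}$ behaves like $\alpha_n$ in the abelian group $\s_n\cong R$, and the product $(1+\alpha_m)\circ(1+\beta_n)$ modulo $\J_{m+n+1}$ is $1+\alpha_m+\beta_n$, while its degree-$(m+n)$ component is governed by the $*$-product, i.e. is $(m+1)\alpha_m\beta_n$ up to lower-weight corrections. I would then translate $[\J,H]\subseteq H$ into statements about $\fh_n$: since $[\J_i,\J_j]\subseteq\J_{i+j}$ by the N-sequence, the commutator of $t+\lambda t^{i+1}\in\J_i$ with $t+\alpha_n t^{n+1}\in H\J_{n+1}$ lands in $H\J_{n+i}$, and by Proposition~\ref{prop:brackets} (or a direct low-degree computation) its leading coefficient in degree $n+i$ is a universal multiple of $\lambda\alpha_n-\alpha_n\lambda=[\lambda,\alpha_n]$ plus a scalar multiple of $\alpha_n\lambda$ and of $\lambda\alpha_n$ separately — here the noncommutativity is exactly what produces new elements.

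For part (1), the containment $\fh_{n+1}\subseteq\fh_{n+2}$ should come from taking a suitable commutator or, more robustly, from the closure of $H$ under the loop operations: if $t+\alpha_{n+1}t^{n+2}\in H\J_{n+2}$, I would multiply by an appropriate element of $\J$ (for instance compose with $t+\lambda t^2$ and divide) to shift the leading term up one degree while staying in $H\J_{n+3}$, using that the binary bracket $\langle a_i,a_j\rangle=(|a_j|+1)a_ja_i-(|a_i|+1)a_ia_j$ lets us move from weight $n+1$ to weight $n+2$ by pairing with weight $1$. Concretely, $[\J_1,\J_{n+1}]\subseteq\J_{n+2}\cap H$, and choosing the weight-$1$ entry to be a scalar (central) element $\lambda\in\field\subseteq R$ makes the commutator bracket collapse to a nonzero scalar multiple of $\lambda\alpha_{n+1}$, hence $\alpha_{n+1}\in\fh_{n+2}$ after rescaling. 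The special case $\fh_1\subseteq\fh_3$ arises because $\fh_1\to\fh_2$ fails (the N-sequence only gives $[\J_i,\J_i],(\J_i,\J_i,\J_i)\subseteq\J_{i+1}$ starting the abelian structure, but the commutator of two weight-$1$ elements already sits in weight $2$, so pairing a weight-$1$ element of $H$ with a weight-$1$ element of $\J$ gives weight $2$; applying the shift once more, or using the associator $(\J_1,\J_1,\J_1)\subseteq\J_3$ directly, lands in weight $3$), so one picks up $\fh_3$ rather than $\fh_2$ from a single generator of $\fh_1$.

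For part (2), the goal $R\fh_n R\subseteq\fh_{n+2}$ is the genuinely new, noncommutative phenomenon, and I expect this to be the main obstacle. The idea is to take $\alpha_n\in\fh_n$, so $t+\alpha_n t^{n+1}\in H\J_{n+1}$, and hit it from both sides: first form a commutator with $t+r t^{2}$ for $r\in R$, which by the bracket formula produces (up to a universal nonzero scalar) $r\alpha_n$ together with $\alpha_n r$ in degree $n+1$; to isolate a genuinely one-sided product $r\alpha_n$ one must play off several such commutators (varying the weight of the $\J$-entry, so the scalar coefficients $N(I)$ in Proposition~\ref{prop:brackets} differ) and take the appropriate $\field$-linear combination — this is where characteristic zero and the infinitude of $\field$ are used. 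Doing it once gains a left factor from $R$ and shifts weight by at least one; doing it again on the other side gains a right factor and shifts weight again; combining, $r\alpha_n r'$ appears as the leading coefficient of an element of $H\J_{n+2}$ lying in degree $n+2$, which says exactly $r\alpha_n r'\in\fh_{n+2}$. The care needed is (i) to check that the intermediate elements genuinely lie in $H$ (they are commutators $[\J,H]\subseteq H$ together with closure of the subloop $H$ under composition and division), and (ii) to verify that the linear combination that kills the unwanted terms $\alpha_n r$, $\alpha_n r'$, $r r'\alpha_n$, etc., has a nonzero coefficient on the wanted term $r\alpha_n r'$ — a small linear-algebra computation with the scalars $\binom{i+1}{2}$ and $(i+1)(j+1)$ appearing in the weight-$2$ bracket formulas. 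The rest is routine bookkeeping modulo $\J_{n+3}$.
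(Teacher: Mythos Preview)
Your overall strategy---commute elements of $H$ with simple elements $t+\beta t^{i+1}\in\J$ and then do linear algebra in the resulting coefficients---is exactly the paper's approach. The working relation is that for $\alpha_n\in\fh_n$, $\beta\in R$ and $i\geq 1$, the commutator of $t+\beta t^{i+1}$ with a representative of $\alpha_n$ in $H$ lies in $H\cap\J_{n+i}$ with leading coefficient $(i+1)\beta\alpha_n-(n+1)\alpha_n\beta$, so
\[
(i+1)\beta\alpha_n-(n+1)\alpha_n\beta\in\fh_{n+i}.
\]
Your sketch of part~(2) is essentially how the paper proceeds: use $i=1,2$ together with $\fh_{n+1}\subseteq\fh_{n+2}$ to obtain $R\fh_n+\fh_nR\subseteq\fh_{n+2}$, then apply the relation once more (to $2\beta\alpha_n-(n+1)\alpha_n\beta\in\fh_{n+1}$, with $\gamma$ in place of $\beta$), reduce modulo $R\fh_n+\fh_nR$, and swap $\beta\leftrightarrow\gamma$ to isolate $\beta\alpha_n\gamma$. (The scalars you need are the ones in the displayed relation, not the $\binom{i+1}{2}$ and $(i+1)(j+1)$ from the associator computation.)

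There is, however, a genuine gap in your argument for $\fh_1\subseteq\fh_3$. Pairing $\alpha_1\in\fh_1$ with a \emph{scalar} weight-$1$ element $t+\lambda t^2$ gives $2\lambda\alpha_1-2\alpha_1\lambda=0$ in degree~$2$, so ``applying the shift once more'' never gets started; pairing with a non-scalar $\beta$ gives only $2[\beta,\alpha_1]\in\fh_2$, not $\alpha_1$ itself. Your alternative, invoking the associator $(\J_1,\J_1,\J_1)\subseteq\J_3$, does not help either: the hypothesis is only $[\J,H]\subseteq H$, and associators with entries from $\J\setminus H$ need not land in $H$. The paper's resolution is precisely the trick you already use in part~(2): vary the weight $i$. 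Taking $i=2$ and $\beta=1$ in the displayed relation gives $3\alpha_1-2\alpha_1=\alpha_1\in\fh_{3}$ directly. So the fix is small, but as written your treatment of the case $n=1$ in part~(1) does not go through.
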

\begin{proof}
Since $[\J,H] \subseteq H$ then for any  $\alpha_n \in \fh_n$, $\beta \in R$ and $i \geq 1$ we have
\begin{equation}\label{eq:absortion}
(i+1) \beta \alpha_n - (n+1) \alpha_n \beta \in \fh_{n+1}.
\end{equation}
With $i =1$ and $\beta = 1$ we obtain $(1-n) \alpha_n \in \fh_{n+1}$, thus since the characteristic of $\field$ is zero, $\fh_{n+1} \subseteq \fh_{n+2}$ for all $n \geq 1$. With $n = 1$, $i = 2$, $\beta = 1$ we get $\fh_1 \subseteq \fh_3$. Again, with $i = 1,2$ we easily get 
\begin{displaymath}
R \fh_n + \fh_n R \subseteq \fh_{n+2}
\end{displaymath}
Now, (\ref{eq:absortion}) with $\alpha_{n+1} = (i+1) \beta \alpha_n - (n+1) \alpha_n \beta \in \fh_{n+1}$, $i =1 $ and $\gamma \in R$ gives $ 2\gamma(2\beta\alpha_n - (n+1)\alpha_n\beta) - (n+2)(2 \beta \alpha_n - (n+1) \alpha_n \beta) \gamma \in \fh_{n+2}$. Thus,
\begin{equation}\label{eq:double_absortion}
2(n+1) \gamma \alpha_n \beta - 2(n+2) \beta \alpha_n \gamma \in \fh_{n+2}.
\end{equation}
By symmetry $2(n+1) \beta \alpha_n \gamma - 2 (n+2) \gamma \alpha_n \beta \in \fh_{n+2}$. Adding these two equations we get $2\gamma \alpha_n \beta - 2 \beta \alpha_n \gamma \in \fh_{n+2}$, which together with (\ref{eq:double_absortion}) gives $\beta \alpha_n \gamma \in \fh_{n+2}$, i.e. $R \fh_n R \subseteq  \fh_{n+2}$.
\end{proof}

\begin{lemma}[Lemma 3.5 in \cite{Kl00}]
Let $\alpha \in R$ and $n \geq 1$. Let $A, B \in \J$ such that
\begin{align*}
	A &\equiv t + \alpha t^{n+1} \pmod{\J_{n+1}}\\
	B & \equiv t + \alpha t^{n+2} \pmod{\J_{n+2}}.
\end{align*}
Let $\beta \in R$ and $m \geq n + 2$. Then there exist $a, c\in \J_{m-n}$ and $b, d \in \J_{m-n-1}$ such that
\begin{displaymath}
 [a,A] \circ [b,B] \equiv t + \alpha \beta t^{m+1} \quad\text{and}\quad [c,A] \circ [d,B] \equiv t + \beta \alpha t^{m+1} \pmod{\J_{m+1}}
\end{displaymath}
\end{lemma}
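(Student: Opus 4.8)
The plan is to produce the required commutators by a direct computation using the formula for the commutator $[a,A] = (A\circ a)\backslash(a\circ A)$ together with the low-degree expansions of substitution that were already used in Section~\ref{sec:N-sequence}. First I would record the basic absorption identity: if $A\equiv t+\alpha t^{n+1}\pmod{\J_{n+1}}$ and $a = t + \xi t^{p+1}\in\J_p$ (so $p = m-n$ here), then working modulo $\J_{m+1}$ one finds
\begin{displaymath}
[a,A]\equiv t + \big((p+1)\xi\alpha - (n+1)\alpha\xi\big)t^{m+1}\pmod{\J_{m+1}},
\end{displaymath}
which is the loop-theoretic shadow of the bracket $\langle a,b\rangle = (|b|+1)ba - (|a|+1)ab$ computed for the product $*$ (compare~(\ref{eq:absortion}) in the previous Lemma, with $i = p$). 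The analogous computation with $B\equiv t+\alpha t^{n+2}\pmod{\J_{n+2}}$ and $b = t+\eta t^{q+1}\in\J_{q}$, $q = m-n-1$, gives
\begin{displaymath}
[b,B]\equiv t + \big((q+1)\eta\alpha - (n+2)\alpha\eta\big)t^{m+1}\pmod{\J_{m+1}}.
\end{displaymath}
Both of these follow from~(\ref{eq:gamma}) by keeping only the terms of degree $m$ and discarding everything of higher depth; Proposition~\ref{prop:offset} guarantees that the cross terms between the two "blocks" $\xi$ and $\alpha$ are the only ones that survive in the lowest nonzero component.

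Next I would observe that since $A$ and $B$ agree with $t$ modulo $\J_2$ and $a,b$ lie in $\J_{m-n}$ with $m-n\geq 2$, the depths of $[a,A]$ and $[b,B]$ are at least $m+1$, so that modulo $\J_{m+1}$ the substitution product $[a,A]\circ[b,B]$ is simply additive:
\begin{displaymath}
[a,A]\circ[b,B]\equiv t + \Big((p+1)\xi\alpha - (n+1)\alpha\xi + (q+1)\eta\alpha - (n+2)\alpha\eta\Big)t^{m+1}\pmod{\J_{m+1}}.
\end{displaymath}
Thus the problem reduces to the purely linear-algebraic task of choosing $\xi,\eta\in R$ (and then setting $a = t+\xi t^{p+1}$, $b = t+\eta t^{q+1}$, similarly $c = t+\xi' t^{p+1}$, $d = t+\eta' t^{q+1}$) so that the bracketed coefficient equals $\alpha\beta$ in the first case and $\beta\alpha$ in the second. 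Writing $N_1 := p+1 = m-n+1$ and $N_2 := q+1 = m-n-1$, this is the pair of equations
\begin{displaymath}
N_1\,\xi\alpha - (n+1)\alpha\xi + N_2\,\eta\alpha - (n+2)\alpha\eta = \alpha\beta
\end{displaymath}
and the same with the right-hand side replaced by $\beta\alpha$. Taking $\xi := \lambda\beta$ and $\eta := \mu\beta$ for scalars $\lambda,\mu\in\field$ turns this into $(N_1\lambda + N_2\mu)\beta\alpha - ((n+1)\lambda + (n+2)\mu)\alpha\beta = \alpha\beta$ (resp. $=\beta\alpha$), a $2\times 2$ linear system in $\lambda,\mu$ whose determinant is $N_1(n+2) - N_2(n+1) = (m-n+1)(n+2) - (m-n-1)(n+1) = m + 3 + 2n \neq 0$ in characteristic zero; hence a solution exists in each case. (Alternatively one can keep $\xi,\eta$ as genuine noncommutative unknowns and solve directly; the scalar substitution is just the cleanest route.)

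The step I expect to require the most care is the bookkeeping in the first paragraph — verifying that no unwanted monomials of degree $m$ creep into $[a,A]$ beyond the single cross term, and that the coefficients $p+1$, $n+1$, etc., come out exactly as claimed. This is where one must expand $(A\circ a)\backslash(a\circ A)$ honestly: compute $a\circ A$ and $A\circ a$ to degree $m$ using~(\ref{eq:gamma}), noting that $(t+\xi t^{p+1})^{i+1}$ contributes $(i+1)\xi$ to the relevant coefficient, then divide. Because $a - t$ has depth $p = m-n$ and $A - t$ has depth $n$, every term of degree $<m$ in $a\circ A$ and in $A\circ a$ coincides, so the left division contributes nothing extra below degree $m+1$ and the degree-$m$ coefficient of $[a,A]$ is just the difference of the degree-$m$ coefficients of $a\circ A$ and $A\circ a$; that difference is $(p+1)\xi\alpha - (n+1)\alpha\xi$ by a short calculation. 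The $B$-computation is identical with $n$ replaced by $n+1$. Once these two expansions are in hand, the remainder is the elementary linear algebra above, and one simply reads off $a,b,c,d$.
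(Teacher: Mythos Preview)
Your approach is identical to the paper's: pick $a = t+\lambda\beta\, t^{m-n+1}$, $b = t+\mu\beta\, t^{m-n}$, compute the leading coefficient of each commutator, add (since the product is additive at that depth), and solve the resulting $2\times 2$ linear system in $\lambda,\mu$ by checking that the determinant is nonzero. Modulo a couple of harmless off-by-one slips in your write-up ($N_2 = q+1$ equals $m-n$, not $m-n-1$; the depths of $[a,A]$ and $[b,B]$ are at least $m$, not $m+1$, which is what actually makes the cross terms fall into $\J_{m+1}$), the argument is correct and matches the paper's proof.
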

\begin{proof}
 Take $a= t + \lambda \beta t^{m-n+1}$ and $b = t + \mu \beta t^{m-n}$ for some $\lambda, \mu \in \field$ to be determined. Modulo $\J_{m+1}$ we have
 \begin{align*}
	  [a,A] \circ [b,B] & \equiv t + (((m-n+2) \lambda + (m-n+1)\mu)\beta \alpha \\
	  & \quad - ((n+1) \lambda + (n+2)\mu) \alpha\beta)t^{m+1}
 \end{align*}
 so we can choose adequate $\lambda, \mu$ to obtain the result in the statement.
\end{proof}

Consider $I(H)$ to be the ideal of $R$ generated by $\cup_{n \geq 1} \fh_n$ and recall that $\J(R)$ is a topological space with the topology given by considering $\{ \J_n \mid n \geq 1\}$ a base of neighborhoods of the unit element $t$.

\begin{proposition}[Proposition 1.1 in \cite{Kl00}]\label{prop:Klopsch}
	Let $R$ be a unital associative  algebra. Let $H$ be a subloop of $\J(R)$ such that $[\J(R),H] \subseteq H$ and $I(H)$ is finitely generated either as a left ideal or as a right ideal of $R$. Then $H$ is closed in $\J(R)$.
\end{proposition}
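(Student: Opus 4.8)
The plan is to mimic Klopsch's argument for the commutative case, showing that the finite-generation hypothesis forces $H$ to contain all of a suitable depth-shifted ``congruence subloop'' built from $I(H)$, and then observe that such a subloop is open, hence $H$ is a union of cosets of an open subloop and therefore closed. First I would fix a finite set of generators $r_1,\dots,r_s$ of $I(H)$, say as a left ideal (the right-ideal case being symmetric). By the preceding Lemma each $r_k$ lies in some $\fh_{n_k}$, and by part (1) of the first Lemma the sequence $\fh_n$ is eventually increasing, so there is an $N$ with $r_1,\dots,r_s \in \fh_n$ for all $n \geq N$. The key point is then to show that $t + \gamma t^{m+1} \in H$ whenever $\gamma \in I(H)$ and $m$ is large enough relative to $N$.

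The next step is the absorption argument. Writing an arbitrary element of $I(H)$ as a finite sum $\gamma = \sum_j \beta_j r_{k_j}$ with $\beta_j \in R$ (using that $I(H) = \sum_k R r_k$ since $R$ is unital and $I(H)$ is a left ideal generated by the $r_k$), I would use part (2) of the first Lemma, namely $R\fh_nR \subseteq \fh_{n+2}$, together with the additive closure $R\fh_n + \fh_nR \subseteq \fh_{n+2}$, to conclude $\beta r_k \in \fh_{n+1}$ and hence $\gamma \in \fh_{n+1}$ for every sufficiently large $n$; more precisely $I(H) \cap (\text{large depth}) \subseteq \fh_n$ for all $n \geq N+1$. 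Concretely this gives: for all $m \geq N+1$ and all $\gamma \in I(H)$, there is $A \in H$ with $A \equiv t + \gamma t^{m+1} \pmod{\J_{m+1}}$. The second Lemma (Lemma 3.5) is the tool that lets one build, from two such ``leading-term'' elements of $H$, a new element of $H$ with a prescribed product $\alpha\beta$ or $\beta\alpha$ as leading coefficient; combined with the absorption this upgrades the statement from ``$\fh_n \supseteq$ generators'' to ``$\fh_m \supseteq I(H)$ for all $m \geq N+1$''.

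From here the topological conclusion is short. Consider the subset $K := \{\, w \in \J \mid w \equiv t \pmod{\J_{N+1}} \text{ is a congruence condition making } w \in H \,\}$; more usefully, one shows directly that $\J_M \subseteq H$ for $M$ large: indeed any element of $\J_M$ can be written as a convergent product $\prod (t + \gamma_i t^{i+1})$ with $\gamma_i \in R \subseteq I(H)$ and $i \geq M \geq N+1$, each factor lying in $H$ by the previous paragraph, and $H$ being a subloop containing a neighborhood base's worth of factors — here one must be slightly careful because infinite products and subloops need compatibility, so instead I would argue that $\fh_m = R$ for all large $m$ implies $H \J_{m+1} \supseteq \J_m$, and iterating/taking the topological closure that $H$ is open. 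An open subloop of a topological loop has open cosets $wH$ (translates by the continuous maps $L_w$), so the complement of $H$ is a union of open cosets and $H$ is closed.

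The main obstacle I expect is the non-associativity: in the group case one freely manipulates products $\prod(t+\gamma_i t^{i+1})$ and uses that a subgroup containing all sufficiently deep one-parameter factors contains $\J_M$, but in a loop a ``subloop'' is not closed under arbitrary infinite products nor even under reassociation, so the step ``$\fh_m = R$ for all large $m$ $\Rightarrow$ $\J_M \subseteq H$ for some $M$'' needs care. The fix is to exploit the N-sequence structure: since $\J_m/\J_{m+1}$ is abelian and the filtration is central modulo each successive term (as established in Section~\ref{sec:N-sequence}), the condition $\fh_m = R$ for $m = M, M+1, M+2, \dots$ lets one correct an arbitrary $w \in \J_M$ term by term by left-multiplying by elements of $H$, each correction killing one more graded component, and the process converges in the $\J_n$-topology to show $w \in \overline{H} \cap (\text{coset})$; since we only need $H$ closed, proving $\J_M \subseteq \overline{H}$ and that $\overline{H}$ is a subloop with $[\J,\overline{H}]\subseteq\overline{H}$ suffices, and then $\overline{H}$ open forces $H = \overline{H}$ by the coset argument. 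So the real content is repackaging Klopsch's ``$H \supseteq \J_M$'' step into a convergence statement valid without associativity, which the N-sequence machinery of the paper makes routine.
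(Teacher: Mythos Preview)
Your argument has a genuine gap. In the second paragraph you correctly obtain $I(H)\subseteq\fh_m$ for all large $m$ (and this already follows from $R\fh_nR\subseteq\fh_{n+2}$ together with finite generation; Lemma~3.5 is not needed for it). But in the third paragraph you silently replace this by $\fh_m=R$ and aim for $\J_M\subseteq H$. That conclusion is false whenever $I(H)\neq R$: for commutative $R$ with a proper ideal $I$, the normal subgroup $H=\ker(\J(R)\to\J(R/I))$ has $I(H)=I$ and contains no $\J_M$. Your fallback---show $\J_M\subseteq\overline{H}$, deduce $\overline{H}$ is open, then ``$\overline{H}$ open forces $H=\overline{H}$ by the coset argument''---is circular: openness of $\overline{H}$ only tells you $\overline{H}$ is closed, which is automatic, and your term-by-term correction merely rebuilds a sequence in $H$ converging to $w$, i.e.\ it re-derives $w\in\overline{H}$.

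The idea you are missing is precisely why the paper singles out the \emph{fixed} elements $A_j,B_j\in H$ with leading terms $\sigma_j t^{n+1},\sigma_j t^{n+2}$, and why Lemma~3.5 matters. Given $w\in\overline{H}$ and an initial $h\in H$ approximating $w$, each successive correction needed to push $h\backslash w$ one step deeper can, by Lemma~3.5, be realised as a product of commutators $[a,A_j]\circ[b,B_j]$ with $a,b$ of depth roughly $m-n$. Because there are only $2s$ fixed second arguments $A_1,B_1,\dots,A_s,B_s$, one organises the construction so that at every stage $w$ is approximated by $h\circ\prod_j([c_j,A_j]\circ[d_j,B_j])$ for suitable $c_j,d_j$, and passing from stage $m$ to $m+1$ only modifies each $c_j,d_j$ by an element of depth tending to infinity. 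Hence the $c_j,d_j$ converge in $\J$, and by continuity $w=h\circ\prod_j([c_j,A_j]\circ[d_j,B_j])$, a \emph{finite} product of elements of $H$ (using $[\J,H]\subseteq H$), so $w\in H$. The N-sequence property is what makes the commutator bookkeeping at each stage go through exactly as in the associative case; this compression of infinitely many corrections into $2s$ commutators with fixed second entries is the content of ``Klopsch's proof applies verbatim''.
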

\begin{proof}
The ideal $I := I(H)$ can be written either as $I = R \sigma_1 + \cdots + R\sigma_s$ or $I = \sigma_1 R + \cdots + \sigma_s R$ for some $\sigma_1,\dots, \sigma_s \in R$. Since $R\fh_nR \subseteq \fh_{n+2}$ then there exists $n$ such that $I = \fh_n = \fh_{n+1} = \cdots$ and we can find $A_j, B_j \in H$ such that 
\begin{align*}
 A_j & \equiv t + \sigma_j t^{n+1} \pmod{\J_{n+1}},\\
 B_j & \equiv t + \sigma_j t^{n+2} \pmod{\J_{n+2}}.
\end{align*}
Since the filtration $\J_1 \supseteq \J_2 \supseteq \cdots$ is an N-sequence, the proof in \cite{Kl00} applies verbatim.
\end{proof}

\begin{proposition}
Let $R$ be a simple unital associative  algebra and let $H$ be a nontrivial subloop of $\J(R)$ such that $[\J(R),H] \subseteq H$. Then there exists $n \geq 1$ such that $\J_{n+2}(R) \subseteq H \subseteq \J_n(R)$. In particular $H$ is a closed normal subloop.
\end{proposition}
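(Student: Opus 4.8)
The approach is to sandwich $H$ as $\J_{n+2}(R) \subseteq H \subseteq \J_n(R)$ and then to read off closedness and normality. First I would put $n := \min\{d(w) \mid w \in H,\ w \neq t\}$, a finite positive integer since $H$ is nontrivial; by the definition of depth this gives $H \subseteq \J_n(R)$ immediately. Recall the basic facts about the subgroups $\fh_m$ established above: each $\fh_m$ is an additive subgroup of $R$ (because $\J_m(R)/\J_{m+1}(R)$ is abelian and $H\J_{m+1}(R)$ is a subloop), one has $\fh_2 \subseteq \fh_3 \subseteq \fh_4 \subseteq \cdots$ and $\fh_1 \subseteq \fh_3$, and $R\fh_m R \subseteq \fh_{m+2}$ as well as $R\fh_m + \fh_m R \subseteq \fh_{m+2}$; consequently $\bigcup_{m \geq 1}\fh_m$ is already a two-sided ideal of $R$, hence $I(H) = \bigcup_{m\geq 1}\fh_m$.

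Next I would show that $\fh_m = R$ for every $m \geq n+2$. Choose $w \in H$ with $d(w) = n$; writing $w \equiv t + \alpha\, t^{n+1} \pmod{\J_{n+1}(R)}$ we obtain $0 \neq \alpha \in \fh_n$. Since $R$ is simple and unital, the two-sided ideal $R\alpha R$ generated by $\alpha$ equals $R$, and as $R\alpha R \subseteq \fh_{n+2}$ this forces $\fh_{n+2} = R$; by monotonicity $\fh_m = R$ for all $m \geq n+2$. In particular $I(H) = R$, which is generated by $1$ as a left ideal, so $H$ is closed in $\J(R)$ by Proposition~\ref{prop:Klopsch}.

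The heart of the proof is to promote ``$\fh_m = R$ for all $m \geq n+2$'' to the genuine inclusion $\J_{n+2}(R) \subseteq H$, and for this closedness of $H$ is indispensable. Given $g \in \J_{n+2}(R)$, I would build recursively elements $h^{(k)} \in H$ and $g^{(k)} \in \J_{n+2+k}(R)$ with $g^{(0)} = g$: writing $g^{(k)} \equiv t + \gamma\, t^{n+3+k} \pmod{\J_{n+3+k}(R)}$ with $\gamma \in R = \fh_{n+2+k}$, choose $h^{(k)} \in H$ with $h^{(k)} \equiv t + \gamma\, t^{n+3+k} \pmod{\J_{n+3+k}(R)}$ and set $g^{(k+1)} := h^{(k)} \backslash g^{(k)}$, which lies in $\J_{n+3+k}(R)$. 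Unrolling $g^{(k)} = h^{(k)} \circ g^{(k+1)}$ gives, for each $k$, $g = h^{(0)} \circ \bigl(h^{(1)} \circ (\cdots \circ (h^{(k-1)} \circ g^{(k)}) \cdots)\bigr)$. Since $g^{(k)} \in \J_{n+2+k}(R)$ and every $\J_m(R)$ is a normal subloop, replacing $g^{(k)}$ by $t$ and propagating the resulting congruence outward through the brackets shows that the left-nested partial product $P_k := h^{(0)} \circ \bigl(h^{(1)} \circ (\cdots \circ (h^{(k-2)} \circ h^{(k-1)}) \cdots)\bigr)$, which lies in the subloop $H$, satisfies $P_k \equiv g \pmod{\J_{n+2+k}(R)}$. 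Hence $P_k \to g$, and closedness of $H$ yields $g \in H$; therefore $\J_{n+2}(R) \subseteq H \subseteq \J_n(R)$.

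I expect this telescoping to be the main obstacle: since $\circ$ is not associative one cannot collapse the nested product, so one must verify --- coefficient by coefficient, or equivalently in the quotients $\J(R)/\J_m(R)$ --- that perturbing a deeply nested argument perturbs the whole product only at correspondingly deep order, which is precisely where the normality of each $\J_m(R)$ (equivalently, the N-sequence property) enters. Once the sandwich is in place, closedness has already been obtained, and normality of $H$ follows formally: the commutators satisfy $[\J(R),H] \subseteq H$ by hypothesis, while every associator and, more generally, every balanced bracket of weight $\geq 2$ formed from factors in $H \subseteq \J_n(R)$ and factors in $\J(R) = \J_1(R)$ is trapped in $\J_{n+2}(R) \subseteq H$ by the N-sequence property; hence $H$ is invariant under the inner mappings of $\J(R)$.
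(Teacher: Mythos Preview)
Your proof is correct and follows essentially the same route as the paper: pick $n$ so that $H\subseteq\J_n$ but $H\not\subseteq\J_{n+1}$, use simplicity of $R$ together with $R\fh_nR\subseteq\fh_{n+2}$ to get $\fh_m=R$ for all $m\geq n+2$, invoke Proposition~\ref{prop:Klopsch} for closedness, and then conclude $\J_{n+2}\subseteq H$ and normality. The paper compresses the passage from ``$\fh_m=R$ for $m\geq n+2$'' to ``$\J_{n+2}\subseteq H$'' into a single sentence, whereas you spell out the telescoping approximation explicitly; your use of the normality of each $\J_m$ to propagate the congruence $g^{(k)}\equiv t$ outward through the nested product is exactly the mechanism that makes this work in the nonassociative setting. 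For normality, the paper phrases the argument via the quotient $\J/\J_{n+2}$ (where $\bar H$ lies in the nucleus, so the commutator hypothesis suffices), while you argue directly with the N-sequence; these are the same observation in different clothing.
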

\begin{proof}
Let $n \geq 1$ be such that $H \subseteq \J_n$ but $H \not\subseteq \J_{n+1}$. Thus $\fh_n \ne 0$. Since $R\fh_n R \subseteq \fh_{n+2+i}$ ($i\geq 0$) then $\fh_{n+2+i} = R$ and $I(H) = R$, which is generated by $1$ as a left or a right $R$-module. By Proposition~\ref{prop:Klopsch} $H$ is closed. Since $R = \fh_{n+2} = \fh_{n+3} = \cdots$ then $\J_{n+2} \subseteq H$. Moreover, $[\J(R),H] \subseteq H$ and $\J_{n+2}\subseteq H$ imply that  $H/\J_{n+2}$ is a normal subloop of $\J/\J_{n+2}$, so $H$ is a normal subloop of $\J$.
\end{proof}

%
\section{Related Lie algebras}

Let $\ring = \oplus_{i \geq 0} \ring_i$ be a graded associative algebra,  $A:= \oplus_{i\geq 0} \ring_i t^{i +1}$ contained in the algebra $\ring[t]$ of polynomials in $t$ with coefficients in $\ring$ and  $\frac{d}{dt}$ the derivation of $\ring[t]$ with respect to the indeterminate $t$. $A$ is also an algebra, so $\ring[t]$ is, with the product in (\ref{eq:prod_*})
\begin{displaymath}
a*b := a'b
\end{displaymath}
where $a' := \frac{d}{dt}a$. 

The right multiplication operator by $b$ is $R^*_b = R_b \frac{d}{dt} $. Since
\begin{displaymath}
	[R^*_a,R^*_b] (c) = c''(ba-ab) + c' (b'a-a'b)
\end{displaymath}
we observe that if $\ring[\ring,\ring] = 0$ then
\begin{displaymath}
\left[R_a\frac{d}{dt},R_b\frac{d}{dt}\right] = R_{\langle a,b \rangle} \frac{d}{dt}
\end{displaymath}
where
\begin{displaymath}
\langle a, b \rangle = b'a-a'b.
\end{displaymath}
The associative algebra
\begin{displaymath}
 \phi := \alg\langle R_a \mid a \in \ring[t] \rangle \subseteq \Endo(\ring[t])
\end{displaymath}
generated by $\{R_a \mid a \in \ring[t]\}$ is commutative. Therefore, the Lie algebra $\Lie : = \{R_a \frac{d}{dt} \mid a \in S[t] \}$ is a subalgebra of $\phi \partial$ where $\partial\colon  f \mapsto [\frac{d}{dt}, f]$ is a derivation of $\phi$ and the product of $\phi\partial$ is given by $[f\partial, g\partial] := (f\partial(g) - g\partial(f))\partial$. 

Since the map $a \mapsto R_a \frac{d}{dt}$ is not injective, it is not clear at all whether or not $(\ring[t],\langle - , - \rangle)$  is Wronskian special. For pedagogical reasons we include a proof to show that this might not be the case, but the reader can skip it since we will see that these Lie algebras might fail to satisfy the standard identity $\St_5$ of degree $5$, although they satisfy $\St_6$.

\begin{proposition}\label{prop:nonWronskian}
	Let $\ring := \field e + \field v$ be the algebra determined by $e^2 = e, ev = 0, ve = v$ and $v^2 = 0$. Then the Lie algebra $(\ring[t],\langle -, -\rangle)$ is not Wronskian special.
\end{proposition}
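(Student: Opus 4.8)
The plan is to exhibit an explicit obstruction to the existence of an injective (differential) homomorphism from $(\ring[t],\langle-,-\rangle)$ into a Wronskian envelope, i.e. into some $(\phi,\langle f,g\rangle=g'f-f'g)$ for a commutative algebra $\phi$ with derivation. Recall that $\ring=\field e+\field v$ with $e^2=e$, $ev=0$, $ve=v$, $v^2=0$, so $v\ring=0$ and $\ring v=\field v$; in particular $v$ generates a one-sided annihilator. First I would record the bracket formula $\langle at^{i+1},bt^{j+1}\rangle=(j+1)ab\,t^{i+j+1}-(i+1)ab\,t^{i+j+1}=(j-i)ab\,t^{i+j+1}$ coming from $\langle a,b\rangle=b'a-a'b$ applied to monomials, which shows $\ring[t]$ decomposes as a direct sum of the copy $e\ring[t]=e\field[t]$ (carrying the Witt-algebra structure $\langle -,-\rangle$ since $e$ is an idempotent with $e^2=e$) and the ``module part'' $v\field[t]$ on which $\langle v t^{i+1},vt^{j+1}\rangle=(j-i)v^2t^{\cdots}=0$ and $\langle et^{i+1},vt^{j+1}\rangle=(j-i)ev\,t^{\cdots}=0$ but $\langle vt^{i+1},et^{j+1}\rangle=(j-i)ve\,t^{i+j+1}=(j-i)vt^{i+j+1}$.

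Thus the ideal $M:=v\field[t]$ is a module over the Witt-type algebra $W:=e\field[t]$, and crucially $M$ is an \emph{abelian} ideal that is \emph{not} central: $\langle M,W\rangle\subseteq M$ is nonzero while $\langle W,M\rangle=0$ — the bracket is asymmetric on these pieces only because the multiplication on $\ring$ is noncommutative. The key structural point I would isolate is that in any Wronskian envelope $(\phi,g'f-f'g)$ the bracket is ``antisymmetric in its dependence on the two slots'' in a very rigid way: if $\langle x,y\rangle=0$ for $x$ in some subspace and $y$ in another, a short computation with $g'f-f'g$ should force strong vanishing of the reverse bracket $\langle y,x\rangle$ as well, or force one of the elements into the kernel. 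Concretely, I expect to use that in a commutative differential algebra, $g'f=f'g$ (the vanishing of one Wronskian) together with the behaviour of higher derivatives propagates: $f$ and $g$ are proportional ``up to constants'', whence $\langle g,f\rangle=0$ too. Applying this to the images of $e t^{i+1}$ and $v t^{j+1}$ under a hypothetical embedding, the relation $\langle et^{i+1},vt^{j+1}\rangle=0$ would then force $\langle vt^{j+1},et^{i+1}\rangle=0$, contradicting $\langle vt^{i+1},et^{j+1}\rangle=(j-i)vt^{i+j+1}\neq 0$ for $i\neq j$.

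So the key steps, in order: (1) compute all brackets of monomials in $\ring[t]$ and identify $W=e\field[t]$, $M=v\field[t]$ with $M$ abelian, $\langle W,M\rangle=0$, $\langle M,W\rangle\neq 0$; (2) observe this asymmetry is impossible inside any $(\phi,g'f-f'g)$ — I would phrase a lemma: \emph{if $\phi$ is a commutative differential $\field$-algebra (char $0$) and $f,g\in\phi$ satisfy $g'f-f'g=0$, then $f'g-g'f=0$}, which is trivial, but the real content is that $g'f-f'g=0$ together with the full Leibniz structure controls the repeated brackets; (3) more carefully, use that an embedding must be a homomorphism of \emph{all} the structure, so transport a whole family of vanishing relations $\langle et^{i+1},vt^{j+1}\rangle=0$ and nonvanishing $\langle vt^{i+1},et^{j+1}\rangle\neq 0$, and derive a contradiction by evaluating the Wronskian-bracket identities on these images.

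The main obstacle I anticipate is making precise exactly which intrinsic identity of Wronskian special algebras is violated. The crude observation ``the bracket is asymmetric between $W$ and $M$'' is not by itself a contradiction (a subalgebra of $g'f-f'g$ can certainly have $\langle x,y\rangle=0\neq\langle y,x\rangle$ if $x,y$ are not in the envelope but only in the subalgebra — wait, but $\langle-,-\rangle$ is globally antisymmetric as a Lie bracket, so $\langle x,y\rangle=-\langle y,x\rangle$ always!). This is the real crux: since $\langle-,-\rangle$ is the Lie bracket it \emph{is} antisymmetric, $\langle at^{i+1},bt^{j+1}\rangle=(j-i)ab\,t^{i+j+1}$ must secretly equal $-(i-j)ba\,t^{i+j+1}=(j-i)ba\,t^{i+j+1}$, forcing $ab=ba$ on all relevant products — so in fact $(\ring[t],\langle-,-\rangle)$ only depends on $\ring$ through its commutative shadow, meaning $v$ behaves as in the commutative algebra $\field e\oplus\field v$ with $ve=v=ev$. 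Then the genuine obstruction is the familiar one from \cite{KOU84,Be79}: this $\field[t]$-analogue of the Witt algebra acting on a non-faithful module fails $\St_5$ (the standard identity of degree $5$), whereas every Wronskian special Lie algebra satisfies $\St_5$. So the correct plan is: (1) verify $(\ring[t],\langle-,-\rangle)$ is a well-defined Lie algebra with the given bracket; (2) exhibit five elements on which $\St_5$ does not vanish — natural candidates are $v$ together with $et^{i+1}$ for four values of $i$, or products landing in $M$ where the non-faithfulness of the action lets the alternating sum survive; (3) invoke \cite{Be79} that Wronskian special (= vector fields on a line) Lie algebras are $\St_5$-algebras, concluding non-Wronskian-speciality. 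Executing step (2) — pinning down the exact five-tuple and checking the alternating sum of length-four iterated brackets is nonzero — is the computational heart and the place most likely to need care.
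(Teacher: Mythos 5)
Your opening computation of the bracket is incorrect, and the error propagates through the whole first half of the proposal. For monomials one has
$\langle at^{i+1},bt^{j+1}\rangle=(j+1)\,ba\,t^{i+j+1}-(i+1)\,ab\,t^{i+j+1}$,
not $(j-i)\,ab\,t^{i+j+1}$: the term $b'a$ in $\langle a,b\rangle=b'a-a'b$ carries the product $ba$, not $ab$. In particular $\langle et^{i+1},vt^{j+1}\rangle=(j+1)v\,t^{i+j+1}\neq 0$, so the asymmetry ``$\langle W,M\rangle=0\neq\langle M,W\rangle$'' you propose to exploit simply does not exist (the bracket $b'a-a'b$ is antisymmetric by its very definition), and your attempted repair is also false: antisymmetry forces nothing like $ab=ba$, and the Lie algebra is \emph{not} determined by a commutative shadow of $\ring$ --- it sees $ab$ and $ba$ separately through the coefficients $(i+1)$ and $(j+1)$. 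Worse, if the commutative-shadow claim were true it would defeat your own final plan, since the Wronskian bracket of a commutative differential algebra satisfies $\St_5$ (that is precisely the Bergman result you intend to invoke), so no violating $5$-tuple could exist.

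Your closing strategy --- exhibit a failure of $\St_5$ and use that every Wronskian special Lie algebra is an $\St_5$-algebra --- is sound in outline and genuinely different from the paper's argument; the paper itself remarks that its direct proof can be skipped for this reason. The paper's own proof assumes an injective $\varphi$ into a commutative differential algebra, sets $f_i:=\varphi(et^i)$, $g_j:=\varphi(vt^j)$, translates the bracket into the relations $f_j'f_i-f_i'f_j=(j-i)f_{i+j-1}$, $g_j'f_i-f_i'g_j=jg_{i+j-1}$, $g_j'g_i=g_i'g_j$, and eliminates until it reaches $j(k-i)g_{i+j+k-2}=0$, contradicting injectivity. On your route the entire content of the proof is the explicit witness for the failure of $\St_5$ (the paper's is $\St_5(et,et^2,vt,e,e)=4v$), and you never produce it; as written, your computations would not lead you to one --- they suggest it cannot exist. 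That unexecuted step, resting on an incorrect bracket formula, is a genuine gap.
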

\begin{proof}
	Assume that we have a commutative algebra $\phi$ with a derivation $f \mapsto f'$ and an injective map $\varphi \colon \ring[t] \rightarrow \phi$ such that
	\begin{equation}\label{eq:embedding}
		\varphi(b'a-a'b) = \varphi(b)'\varphi(a) - \varphi(a)' \varphi(b) 
	\end{equation}
	where we also use the notation $a'$ for the derivation $\frac{d}{dt}$ of $\ring[t]$. Define elements in $\phi$
	\begin{displaymath}
	 f_i := \varphi(et^i)\quad \text{and} \quad g_j := \varphi(vt^j).
	\end{displaymath}
	By (\ref{eq:embedding}) we have
	\begin{equation} \label{eq:commutators}
		f'_jf_i - f'_if_j = (j-i)f_{i+j-1}, \quad g'_j f_i - f'_ig_j = j g_{i+j-1} \quad \text{and} \quad g'_j g_i = g'_i g_j.
	\end{equation}
	Thus,
	\begin{align*}
		jf_1 g_{i+j-1} &= g'_jf_1f_i - f_1f'_i g_j = g'_j f_1 f_i - ((i-1)f_i - f'_1 f_i)g_j \\
		&= (g'_j f_1 - g_j f'_1)f_i - (i-1) f_i = (j-i+1)f_i g_j
	\end{align*}
	and we have
	\begin{equation} \label{eq:fi_to_f1}
		(j-i+1)f_i g_j = j f_1 g_{i+j-1}.
	\end{equation}
	With $i = j+1$ we get $j g_1 g_{2j} = 0$, i.e.
	\begin{equation}\label{eq:f1g2j=0}
		f_1 g_{2j} = 0 \quad \text{if} \quad j \geq 1.
	\end{equation}
	Taking derivatives we obtain $f_1 g'_{2j} + f'_1g_{2j} = 0$. Adding/subtracting the second equation in (\ref{eq:commutators}) we get 
	\begin{equation}\label{eq:f1g2jtog2j}
		g'_{2j}f_1 = j g_{2j} = -f'_1g_{2j} \quad \text{if} \quad j \geq 1.
	\end{equation}
	Moreover, $f'_i f_1 - f'_1f_i = (i-1)f_i$,  (\ref{eq:f1g2j=0}) and (\ref{eq:f1g2jtog2j}) imply $(i-1)f_i g_{2j} = -f'_1f_if_{2j} = j f_ig_{2j}$, so $(j-i+1)f_i g_{2j} = 0$. By (\ref{eq:fi_to_f1}) this implies $2j f_1 g_{i+2j-1} = 0$, and with $i = 0$ we get $2j f_1 g_{2j-1} = 0$. Therefore,
	\begin{equation}\label{eq:f1gj=0}
		f_1 g_j = 0 \quad \text{if} \quad j \geq 1.
	\end{equation}
	With this relation, (\ref{eq:fi_to_f1}) implies 
	\begin{equation}\label{eq:figj=0}
		f_i g_j = 0 \quad \text{if} \quad i + j \geq 2 \text{ and } i \neq j+1.
	\end{equation}
	Thus, $g'_jf_i + f'_ig_j = 0$ (we will assume the restrictions about $i,j$ in the following). Adding/subtracting this relation to the second equation in (\ref{eq:commutators}) we get 
	\begin{equation}\label{eq:g'jfi}
	2g'_j f_i = jg_{i+j-1} = -2f'_i g_j.
	\end{equation}  
	Multiplying by $2f'_k$, on the one hand for $i+j \geq 2, i \neq j+1, i+j+k \geq 3, k \neq i +j$ we obtain
	\begin{equation}\label{eq:half1}
			4 f'_k f_i g'_j = 2j f'_k g_{i+j-1} = -j (i+j-1)g_{i+j+k-2}
	\end{equation}
	but on the other hand if in addition $j+k \geq 2, k \neq j+1, i\neq j+k, i+k \neq j+2$ then we have
	\begin{align*}
		4f'_k f_i g'_j &= 4(f'_if_k + (k-i)f_{i+k-1})g'_j && \text{by \eqref{eq:commutators}} \\
		&= 4f'_i f_kg'_j + 4(k-i)f_{i+k-1}g'_j\\
		&= -j(j+k-1) g_{i+j+k-2} + 4(k-i)f_{i+k-1}g'_j && \text{by \eqref{eq:half1}}\\
		&= -j(j+k-1)g_{i+j+k-2} + 2j(k-i)g_{i+j+k-2} && \text{by \eqref{eq:g'jfi}}\\
		&= j(k-2i-j+1)g_{i+j+k-2}.
	\end{align*}
	Therefore,
	\begin{displaymath}
	j(k-i) g_{i+j+k-2} = 0
	\end{displaymath}
	which contradicts the injectivity of $\varphi$.
\end{proof}

\begin{proposition}
	The Lie algebra $(\ring[t],\langle -, -\rangle)$  in Proposition~\ref{prop:nonWronskian} does not satisfy $\St_5$.
\end{proposition}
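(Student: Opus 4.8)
The plan is to make the Lie algebra $L := (\ring[t], \langle-,-\rangle)$ of Proposition~\ref{prop:nonWronskian} fully explicit in a basis and then exhibit a single quintuple on which $\St_5$ fails to vanish. Write $f_i := et^i$ and $g_j := vt^j$ for $i, j \ge 0$; these form a basis of $\ring[t]$. From $\langle a, b\rangle = b'a - a'b$ together with the relations $e^2 = e$, $ev = 0$, $ve = v$, $v^2 = 0$ one reads off the structure constants
\[
\langle f_i, f_j\rangle = (j-i)\,f_{i+j-1}, \qquad \langle f_i, g_j\rangle = j\,g_{i+j-1}, \qquad \langle g_i, g_j\rangle = 0 .
\]
Thus the $g_j$ span an abelian ideal $M$, while the $f_i$ span a subalgebra $F \cong \field[t]\tfrac{d}{dt}$ of vector fields on the affine line, which is a $\St_5$-algebra by \cite{Be79}. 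Using $[M, M] = 0$, $[F, M] \subseteq M$ and a short computation of Vandermonde type, one sees that $\St_5(x_1, x_2, x_3, x_4; z)$ vanishes whenever $z$ lies in $M$, and also whenever all four $x_i$ lie in $F$. Hence the first possibly nonzero case is $z = f_c$ with exactly one of the $x_i$ in $M$.

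Accordingly, I would evaluate $\St_5(g_b, f_{a_2}, f_{a_3}, f_{a_4}; f_c)$ directly, grouping the $24$ summands according to the position $p \in \{1, 2, 3, 4\}$ of $g_b$ inside the nested bracket $[x_{\sigma(1)}, [x_{\sigma(2)}, [x_{\sigma(3)}, [x_{\sigma(4)}, z]]]]$. Each iterated bracket is computed from the structure constants above; every summand of position $p$ is a scalar multiple of the fixed element $g_{a_2+a_3+a_4+b+c-4}$, and, after replacing the triple sum $a_{\sigma(1)} + a_{\sigma(2)} + a_{\sigma(3)}$ by the symmetric value $a_2 + a_3 + a_4$, the signed sum over the six arrangements of $(a_2, a_3, a_4)$ becomes the full antisymmetrization of a polynomial of degree at most $3$ in three variables. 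Such an antisymmetrization is a scalar multiple of the Vandermonde $(a_2 - a_3)(a_2 - a_4)(a_3 - a_4)$, the scalar being found by comparing one top-degree coefficient. Carrying this out for $p = 1, 2, 3, 4$ gives contributions $-2b$, $b$, $-b$ and $0$ times $(a_2 - a_3)(a_2 - a_4)(a_3 - a_4)\,g_{a_2+a_3+a_4+b+c-4}$, so that
\[
\St_5(g_b, f_{a_2}, f_{a_3}, f_{a_4}; f_c) = -2b\,(a_2 - a_3)(a_2 - a_4)(a_3 - a_4)\,g_{a_2+a_3+a_4+b+c-4}.
\]
Taking $b = c = 1$ and $(a_2, a_3, a_4) = (1, 2, 3)$ yields $\St_5(vt, et, et^2, et^3; et) = 4\,vt^4 \ne 0$, so $L$ is not a $\St_5$-algebra.

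The only delicate point is combinatorial: keeping track of the twenty-four iterated brackets and, above all, of the signs of the corresponding permutations in $S_4$ once they have been sorted according to the position of $g_b$ and the arrangement of the remaining three indices. What keeps this manageable is that, after the symmetric partial sums are collapsed, each of the four partial contributions is forced to be a scalar multiple of the Vandermonde determinant in $a_2, a_3, a_4$, so one only needs to track a single leading coefficient in each case rather than expand the entire expression.
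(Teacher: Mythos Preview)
Your argument is correct. The structure constants are right, the case analysis (only $z\in F$ with exactly one $x_i\in M$ can give something nonzero) is sound, and the Vandermonde bookkeeping for the four positions $p=1,2,3,4$ checks out: the degree-$3$ parts of the relevant polynomials in $(a_{j_1},a_{j_2},a_{j_3})$ are $2uw^2-2uvw$, $uw^2-uvw$, $uw^2$, and a degree-$2$ expression respectively, whose antisymmetrizations give $2V$, $V$, $V$, $0$ and hence the coefficients $-2b,\ b,\ -b,\ 0$ you state.

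The paper's proof is different only in that it skips the structural analysis entirely and simply records a single witness, $\St_5(et,\,et^2,\,vt,\,e,\,e)=4v$, leaving the twenty-four–term check to the reader. Your route is longer but more informative: it explains \emph{why} the obstruction must live in the ``one $g$'' sector, and it produces the closed formula
\[
\St_5(g_b,f_{a_2},f_{a_3},f_{a_4};f_c)=-2b\,(a_2-a_3)(a_2-a_4)(a_3-a_4)\,g_{a_2+a_3+a_4+b+c-4},
\]
from which the paper's value is recovered by taking $b=1$, $c=0$, $\{a_2,a_3,a_4\}=\{0,1,2\}$ and using the skew-symmetry of $\St_5$ in its first four arguments. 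Your specialization $(vt,\,et,\,et^2,\,et^3;\,et)$ giving $4vt^4$ is an equally valid witness. The trade-off is brevity versus insight; both proofs are complete.
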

\begin{proof}
	It is not difficult to check that $\St_5(et,et^2,vt,e,e) = 4v$. 
\end{proof}
While $\St_5$ fails in general for these Lie algebras, however we have the following result.
\begin{proposition}\label{prop:Bergman}
	Let $\ring$ be an associative algebra with $\ring[\ring,\ring] = 0$. Then the Lie algebra $(\ring[t],\langle - , - \rangle)$ satisfies $\St_6$. 
\end{proposition}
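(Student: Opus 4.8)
The plan is to realize $(\ring[t],\langle-,-\rangle)$ inside an associative (or Lie) structure close enough to a Wronskian envelope that a known bound on standard identities applies, and then to account for the failure of $\St_5$ caused by the noncommutativity of $\ring$. Concretely, I would work in $\phi\partial$, where $\phi := \alg\langle R_a \mid a \in \ring[t]\rangle \subseteq \Endo(\ring[t])$ and $\partial\colon f\mapsto[\tfrac{d}{dt},f]$, exactly as set up just before the statement. The map $a\mapsto R_a\partial$ is a Lie homomorphism $\Lie\to\phi\partial$, so it suffices to prove $\St_6$ vanishes on the image $\Lie=\{R_a\partial \mid a\in\ring[t]\}$; since $\St_6$ is multilinear, it is enough to check it on homogeneous generators $a=\alpha_i t^{i+1}$, and in fact on $R_a\partial$ with $a$ ranging over a spanning set. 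The key structural fact from $\ring[\ring,\ring]=0$ is that $\phi$ is commutative, hence $\phi\partial$ is a \emph{Lie algebra of vector fields on a line} in the sense recalled in the introduction — but only after we check that $\partial$ is genuinely a derivation of the commutative algebra $\phi$, which it is because $\tfrac{d}{dt}$ is a derivation of $\Endo(\ring[t])$ restricting to $\phi$.

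The first real step is to understand how far $\phi$ is from being generated by a single element over $\field$, i.e. how far $\phi\partial$ is from the Witt-type algebra $\field[t]\partial$. Writing $\ring = \field 1 \oplus \ring_{+}$ (adjoining a unit if necessary) and using $\ring[\ring,\ring]=0$, one sees that $\ring\cdot\ring_{+}\cdot\ring_{+}$ lies in a small piece; more usefully, in $\phi$ any product of three operators $R_{a}R_{b}R_{c}$ with $a,b,c$ built from $\ring_{+}$ is symmetric in $b,c$ (and in fact in all three up to the commuting of the underlying $\ring$-elements). So I would stratify $\phi$ by ``$\ring$-degree'': $\phi = \phi^{(0)}\oplus\phi^{(1)}\oplus\phi^{(\ge 2)}$ where $\phi^{(0)}=\field[t]\mathrm{id}$, $\phi^{(1)}$ is spanned by $R_{\alpha t^{i}}$ ($\alpha\in\ring_{+}$), and $\phi^{(\ge 2)}$ by products of at least two such. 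Because $\ring[\ring,\ring]=0$, the part $\phi^{(\ge 2)}$ is a $\phi^{(0)}$-module annihilated by $[\ring,\ring]$, and — this is the crucial point — the commutator bracket on $\phi\partial$ maps two elements of $\phi^{(\ge 1)}\partial$ into $\phi^{(\ge 1)}\partial$ but, modulo $\phi^{(\ge 2)}\partial$, behaves like a module action over the ``scalar'' Witt algebra $\phi^{(0)}\partial$. In other words, $\phi^{(0)}\partial$ is a genuine Lie algebra of vector fields on the line (it satisfies $\St_5$ by Bershadsky's result quoted as \cite{Be79}), and the rest is an abelian-by-that extension, graded by $\ring$-degree, that collapses after enough brackets.

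Now assemble $\St_6$. Take six elements $a_1\partial,\dots,a_5\partial,z\partial$ in $\Lie$ and write each $a_k=\lambda_k\cdot 1\cdot t^{i_k+1}+(\text{terms of }\ring\text{-degree}\ge 1)$. Expanding $\St_6$ multilinearly in these two pieces: the all-scalar term vanishes because $\phi^{(0)}\partial$ satisfies $\St_5\supseteq\St_6$ (an $\St_5$-algebra is an $\St_{n}$-algebra for all $n\ge5$). Any term with exactly one $\ring$-degree-$\ge1$ factor still lies in a ``one higher'' module stratum but is evaluated by the $\St_5$-type alternation on the four or five scalar arguments sitting to its right in the nested bracket, and that alternation is already zero on a $\St_5$-algebra; because $\St_6$ alternates over all of $S_5$, one groups the $6=5+1$ permutations so that the non-scalar factor is pushed to a fixed slot and the remaining alternation over $S_4$ (or $S_5$) kills it. Finally, any term with two or more $\ring$-degree-$\ge 1$ factors lands in $\phi^{(\ge 2)}\partial$, which is annihilated by $[\ring,\ring]$; but a bracket of two elements of $\phi^{(\ge1)}\partial$ already feeds a commutator of the underlying $\ring$-elements into the coefficient (as the displayed formula $[R^*_a,R^*_b](c)=c''(ba-ab)+c'(b'a-a'b)$ shows, the ``$c''$'' term is exactly $[b,a]$), so two or more such factors produce a coefficient in $[\ring,\ring]\cdot[\ring,\ring]\cdot\ring=0$. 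Hence every term vanishes and $\St_6\equiv 0$ on $\Lie$, so on $(\ring[t],\langle-,-\rangle)$.

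I expect the main obstacle to be the bookkeeping in the middle case — the terms with exactly one noncommutative factor — where one must verify that the $\St_6$ alternation restricted to the five scalar-ish arguments genuinely reduces to an $\St_5$ alternation on an honest Lie-algebra-of-vector-fields quotient, rather than merely on $\phi^{(0)}\partial$ as an abstract Lie algebra. The clean way to handle this is to produce, once and for all, a homomorphism from $\Lie$ modulo its ``$\ring$-degree $\ge 2$'' part onto a semidirect product $W\ltimes M$ with $W$ a Lie algebra of vector fields on a line and $M$ an abelian ideal on which $W$ acts, note that $\St_5(W\ltimes M)=0$ because $W$ is $\St_5$ and $M^2=0$ and the action is by derivations of degree compatible with the grading, and then lift: $\St_6$ on $\Lie$ equals $\St_6$ on $W\ltimes M$ plus an error term living in the $\ring$-degree-$\ge 2$ part, which we have just argued is zero. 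This reduces everything to the single clean fact $\St_5(W\ltimes M)=0$ for such $M$, which itself follows by the same two-block argument (all-$W$ term uses \cite{Be79}; any term with a factor in $M$ has the remaining $S_4$-alternation acting on $W$ and vanishes). That is the structure I would fill in.
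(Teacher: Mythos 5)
Your proposal has two breakdowns, either of which is fatal. First, the opening reduction is invalid: the map $a\mapsto R_a\frac{d}{dt}$ is a Lie homomorphism of $(\ring[t],\langle-,-\rangle)$ \emph{onto} $\Lie\subseteq\phi\,\partial$, but it is not injective --- the paper stresses exactly this point just before the proposition, and Proposition~\ref{prop:nonWronskian} exists precisely because one cannot embed these algebras into a Wronskian envelope in general. An identity verified on a homomorphic image does not lift to the source; proving $\St_6=0$ on $\Lie$ only shows that $\St_6$ on $\ring[t]$ takes values in the kernel of $a\mapsto R_a\frac{d}{dt}$, not that it vanishes. Second, the key cancellation in your case analysis is false. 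You read the formula $[R^*_a,R^*_b](c)=c''(ba-ab)+c'(b'a-a'b)$ as saying that a bracket of two ``$\ring$-degree $\ge 1$'' elements feeds a commutator of $\ring$-elements into the coefficient, so that two or more such factors land in $[\ring,\ring]\cdot[\ring,\ring]\cdot\ring=0$. In fact, under $\ring[\ring,\ring]=0$ the term $c''(ba-ab)$ is already zero, and what survives is $R_{b'a-a'b}\frac{d}{dt}$: the new coefficient $b'a-a'b$ is an ordinary product, not a commutator, so nothing kills terms with several non-scalar factors. The paper's own example refutes the mechanism: for $\ring=\field e+\field v$ one has $\St_5(et,et^2,vt,e,e)=4v\neq 0$, although every argument lies in your degree-$\ge 1$ stratum (here $\ring$ is not even unital, so after adjoining a unit \emph{all} of $\ring[t]$ sits in that stratum and your entire expansion collapses into the case you dispose of incorrectly); if your argument were sound it would prove $\St_5$ as well, contradicting this. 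The middle case is also unproven: polynomial vector fields on the line do not satisfy $\St_4$, so ``the remaining $S_4$-alternation acting on $W$ vanishes'' is not a valid reason for $\St_5(W\ltimes M)=0$.

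For contrast, the paper's proof is a direct adaptation of Bergman's argument \cite{Be79}, carried out inside $\ring[t]$ itself rather than through the representation: expand $\St_6(x_1,\dots,x_5,z)$ into associative monomials in the $x_i$, $z$ and their derivatives; the hypothesis $\ring[\ring,\ring]=0$ allows one to permute freely every factor of such a monomial except the leftmost one; a pigeonhole on the derivative orders (the four $x$-factors that are neither in the leftmost slot nor the $z$-factor cannot all carry distinct numbers of derivatives, since that would require at least $0+1+2+3$ derivations) produces two factors $\frac{d^k}{dt^k}x_i$ and $\frac{d^k}{dt^k}x_j$, and the alternation of $\St_6$ then cancels the monomials in pairs under the transposition of $x_i$ and $x_j$. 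If you want to salvage a structural approach along your lines, you would first have to solve the injectivity problem (which is the Wronskian-speciality question the paper leaves open) --- so the elementary expansion argument is really the intended route here.
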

\begin{proof}
We follow the arguments by Bergman in \cite{Be79}*{Proof of Theorem 2.1}. First we expand $\St_6(x_1,x_2,x_3,x_4,x_5,z)$ in terms of associative monomials in $x_1,\dots,x_5,z$ and their iterated derivatives (only six occurrences of the derivation are allowed). Given one monomial in the expansion, say $\lambda y_1\cdots y_6$, the condition $\ring[\ring,\ring]$ ensures that we can freely reorder $y_2,\dots,y_6$. One of these $y_2,\dots, y_6$ might be $z$ or its derivatives. We focus on the remaining four factors. Since only six occurrences of the derivation are allowed, two of these four factors are of the form $\frac{d^k}{dt^k}x_i$ and $\frac{d^k}{dt^k}x_j$ for some $k$. Because of the alternating sum that defines $\St_6$, we will also have a corresponding summand $-\lambda \bar{y}_1\cdots \bar{y}_6$ where $\frac{d^k}{dt^k}x_i$  and $\frac{d^k}{dt^k}x_j$ are interchanged. Since we can freely reorder these factors, then $\lambda y_1\cdots y_6 - \lambda  \bar{y}_1\cdots \bar{y}_6$ vanishes. Thus, grouping  the summands in pairs we see that $\St_6(x_1,x_2,x_3,x_4,x_5,z) = 0$.
\end{proof}

With an extra condition we can obtain Lie algebras that satisfy $\St_5$.

\begin{proposition}\label{prop:St5}
	Let  $\ring$ be an associative algebra with $\ring[\ring,\ring] = 0$ and $[\ring,\ring]\ring^3 = 0$. Then the Lie algebra $(\ring[t],\langle - , - \rangle)$ satisfies $\St_5$. 
\end{proposition}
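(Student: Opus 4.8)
The plan is to run the argument in the proof of Proposition~\ref{prop:Bergman}, now cashing in the extra hypothesis $[\ring,\ring]\ring^3=0$. First I would note that both identities pass to the polynomial algebra: writing an element of $\ring[t]$ as a finite sum $\sum_i p_i t^i$ with $p_i\in\ring$, one checks at once that $\ring[t][\ring[t],\ring[t]]=0$ and $[\ring[t],\ring[t]]\ring[t]^3=0$. The first says that in any product of at least two elements of $\ring[t]$ one may permute every factor except the first; the second says that in any product of at least five elements one may transpose the first two. Since the transposition $(1\,2)$ together with the permutations of $\{1,\dots,5\}$ fixing $1$ generate all of $S_5$, this yields the statement I actually need: \emph{any product of exactly five elements of $\ring[t]$ is unchanged under an arbitrary permutation of its factors}. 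This is the only place $[\ring,\ring]\ring^3=0$ enters, and it is exactly the amount of commutativity that length-five products require.

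Next, as in Proposition~\ref{prop:Bergman}, I would expand $\St_5(x_1,x_2,x_3,x_4,z)$, evaluated on arbitrary $f_1,f_2,f_3,f_4,g\in\ring[t]$, into associative monomials in $f_1,\dots,f_4,g$ and their iterated $\frac{d}{dt}$-derivatives. Each bracket $[a,b]=\langle a,b\rangle=b'a-a'b$ sends an expression all of whose monomials have $k$ factors and $d$ derivatives to one all of whose monomials have $k+1$ factors and $d+1$ derivatives; starting from $g$ and applying the four brackets of $\St_5$, every monomial that appears is a product of exactly five factors, namely $\frac{d^{a_0}}{dt^{a_0}}g$ and $\frac{d^{a_i}}{dt^{a_i}}f_i$ ($i=1,2,3,4$), with $a_0+a_1+a_2+a_3+a_4=4$. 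In particular $a_1+a_2+a_3+a_4\le 4<6=0+1+2+3$, so the four nonnegative integers $a_1,a_2,a_3,a_4$ cannot be pairwise distinct.

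Finally I would exploit the alternating sign of $\St_5$, exactly as Bergman does. Fix a monomial $M$ arising from the summand indexed by $\sigma\in S_4$, and choose $i\ne j$ with $a_i=a_j$ (say the lexicographically least such pair). Interchanging $f_i$ and $f_j$ throughout the iterated bracket replaces $\sigma$ by $\sigma\cdot(i\,j)$, whose sign is the opposite, and carries $M$ monomial-by-monomial to a monomial $M^{(i\,j)}$ of the new summand; but $M^{(i\,j)}$ has the same multiset of five factors as $M$ (because $a_i=a_j$), hence by the first paragraph equals $M$ as an element of $\ring[t]$. Thus $M$ and $M^{(i\,j)}$ occur with cancelling coefficients, the whole expansion breaks into such pairs, and $\St_5(f_1,f_2,f_3,f_4,g)=0$. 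The step that needs real care — as already in Proposition~\ref{prop:Bergman} — is this last one: one has to check that relabelling $f_i\leftrightarrow f_j$ inside the nested bracket matches the monomials of the $\sigma$-summand bijectively with those of the $\sigma\cdot(i\,j)$-summand, and that, when several of the $a_i$ coincide, the rule ``transpose the least coinciding pair of indices'' really defines a fixed-point-free involution on the set of monomials. Neither point is difficult, but this is where the bookkeeping must be done.
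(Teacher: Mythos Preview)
Your argument is correct and is essentially the paper's own proof spelled out in detail: you show that $\ring[\ring,\ring]=0$ together with $[\ring,\ring]\ring^3=0$ forces every length-five product in $\ring[t]$ to be totally symmetric, and then run Bergman's pigeonhole-and-cancellation argument exactly as in the commutative case. The paper says just this in one sentence, so your write-up is a faithful (and more explicit) rendering of the intended proof.
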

\begin{proof}
The conditions on $\ring$ ensure that we can freely reorder the factors in each summand of the expansion of $\St_5$ as in the proof of Proposition~\ref{prop:Bergman}, so the same proof as in the commutative case in \cite{Be79} remains valid. 
\end{proof}

It would be interesting to study whether or not the algebras in Proposition~\ref{prop:St5} are Wronskian special.

%
%
\section{Appendix: one-sided local loops} 
We observe that the formula $\alpha_m * \beta_n := (m+1) \alpha_m \beta_n$ has not right unit element in case we assume that $1$ is of degree $0$, which leads to a left loop rather than a two-sided loop.

In this section we would like to briefly discuss how to adapt the approach of Mikheev and Sabinin to the study of one-side local loops. Let $(Q,x*y,e)$ be a right (local) loop, i.e. $e$ is the right unit element and the Jacobian of the left and right multiplication operators at $e$ is nonzero, and define a two-sided loop by $$xy = x*(L^{*}_e)^{-1}y.$$ This loop is classified by its Sabinin algebra $(T_e Q, \langle - ; -,- \rangle, \Phi(-;-))$. Thus, if we include a new family of totally symmetric multilinear operation, lets say $[x_1,\dots, x_n]$ ($n \geq 1$), corresponding to the Taylor series of in normal coordinates to classify the map $L^*_e$ (see \cite{MiSa87}) then the algebraic structure 
\begin{equation}\label{eq:right_Sabinin}
(T_e Q, \langle - ; -,- \rangle, \Phi(-;-),[-])
\end{equation}
classifies the right local loop $(Q,x*y,e)$. The integration of these structures to left loops only requires the usual convergence conditions (see \cite{MiSa87}). 

From a geometrical point of view, given a right loop we define the parallel transport as
\begin{displaymath}
 \tau^e_y := dL^*_y(L^*_e)^{-1}\vert_e.
\end{displaymath}
so that
\begin{displaymath}
\tau^x_y := (dL^*_y\vert e) (dL^*_x\vert_e)^{-1}.
\end{displaymath}
This parallel transport defines a right monoalternative geodesic loop $x \times y := \exp_x \tau^e_x \exp^{-1}_e(y)$. For this loop we have
\begin{displaymath}
dL^\times_y\vert_e = \tau^e_y =  dL^*_y(L^*_e)^{-1}\vert_e.
\end{displaymath} 
We can consider the map $\Psi_x$ defined by $x*y = x \times \Psi_x(y)$. The only restrictions on $\Psi$ are
\begin{displaymath}
\Psi_x(e) = e, \quad \Psi_e(y) = e*y \quad \text{and} \quad d\Psi_x\vert_e = dL^*_e\vert_e.
\end{displaymath}
and  $(Q,*)$ is classified by $x \times y$ and $\Psi_x$. This requires the same structure as in (\ref{eq:right_Sabinin}) since we require the Taylor coefficients of $\Psi_e = L^*_e$ and those of $\Psi(x,y)$ on degrees $\geq 1$ and $\geq 2$ on the normal coordinates of $x$ and $y$ respectively. In fact, if we define $\Phi'_x:=(L^*_e)^{-1}\Psi_x$ then $xy = x (L^{*}_e)^{-1}y = x \times \Phi'_x(y)$ with 
\begin{displaymath}
\Phi'_x(e) = e, \quad \Phi'_e(y) = y \quad \text{and} \quad d\Phi'_x\vert_e = \mathrm{Id}.
\end{displaymath}
This proves that the loop $x \times y$ is also the monoalternative perturbation of the loop $xy$ and that $\Psi$ is recovered with the help of $\Phi$ and $L^*_e$.  Moreover, if the right local loop $(Q,*)$ is right monoalternative then $xy$ is a monoalternative loop. Thus $xy = x \times y$ and $x \times y = x* (L^{*}_e)^{-1}(y)$.

Therefore, the classification of local one-sided loops only requires of the usual structure of a Sabinin algebra and an extra family of totally symmetric multilinear operations $[x_1,\dots, x_n]$ ($n \geq 1$).

%
%

\begin{bibdiv}
	\begin{biblist}
		\bib{AkGo06}{article}{
			author={Akivis, M. A.},
			author={Goldberg, V. V.},
			title={Local algebras of a differential quasigroup},
			journal={Bull. Amer. Math. Soc. (N.S.)},
			volume={43},
			date={2006},
			number={2},
			pages={207--226},
		}
		\bib{Ba13}{article}{
			author={Babenko, I. K.},
			title={Algebra, geometry and topology of the substitution group of formal
				power series},
			language={Russian, with Russian summary},
			journal={Uspekhi Mat. Nauk},
			volume={68},
			date={2013},
			number={1(409)},
			pages={3--76},
			translation={
				journal={Russian Math. Surveys},
				volume={68},
				date={2013},
				number={1},
				pages={1--68},
			},
		}
		\bib{Be79}{article}{
				author={Bergman, G.},
				title={The Lie algebra of vector fields in $\mathbb R^n$ satisfies polynomials identities},
				date={1979},
		}
		\bib{BBK16}{article}{
			author={Bogataya, S. I.},
			author={Bogatyi, S. A.},
			author={Kiselev, D. D.},
			title={Powers of elements of the series substitution group $\mathcal{J}(\mathbb
				Z_2)$},
			journal={Topology Appl.},
			volume={201},
			date={2016},
			pages={29--56},
		}
		\bib{BrFrKr06}{article}{
					author={Brouder, C.},
					author={Frabetti, A.},
					author={Krattenthaler, C.},
					title={Non-commutative Hopf algebra of formal diffeomorphisms},
					journal={Adv. Math.},
					volume={200},
					date={2006},
					number={2},
					pages={479--524},
		}
		\bib{Br58}{book}{
			author={Bruck, R. H.},
			title={A survey of binary systems},
			series={Ergebnisse der Mathematik und ihrer Grenzgebiete. Neue Folge,
				Heft 20. Reihe: Gruppentheorie},
			publisher={Springer Verlag, Berlin-G\"ottingen-Heidelberg},
			date={1958},
			pages={viii+185},
		}
		\bib{Ca97}{article}{
			author={Camina, R.},
			title={Subgroups of the Nottingham group},
			journal={J. Algebra},
			volume={196},
			date={1997},
			number={1},
			pages={101--113},
		}
		
		\bib{Ca00}{article}{
			author={Camina, R.},
			title={The Nottingham group},
			conference={
				title={New horizons in pro-$p$ groups},
			},
			book={
				series={Progr. Math.},
				volume={184},
				publisher={Birkh\"auser Boston, Boston, MA},
			},
			date={2000},
			pages={205--221},
		}
		
		\bib{CPS90}{collection}{
			title={Quasigroups and loops: theory and applications},
			series={Sigma Series in Pure Mathematics},
			volume={8},
			editor={Chein, O.},
			editor={Pflugfelder, H. O.},
			editor={Smith, J. D. H.},
			publisher={Heldermann Verlag, Berlin},
			date={1990},
			pages={xii+568},
			isbn={3-88538-008-0},
			review={\MR{1125806}},
		}
		\bib{Ho58}{article}{
			author={Hofmann, K. H.},
			title={Topologische Loops},
			language={German},
			journal={Math. Z.},
			volume={70},
			date={1958},
			pages={13--37},
		}
		\bib{HoSt86}{article}{
			author={Hofmann, K. H.},
			author={Strambach, K.},
			title={Lie's fundamental theorems for local analytical loops},
			journal={Pacific J. Math.},
			volume={123},
			date={1986},
			number={2},
			pages={301--327},
		}
		\bib{HoSt90}{article}{
			author={Hofmann, K. H.},
			author={Strambach, K.},
			title={Topological and analytic loops},
			conference={
				title={Quasigroups and loops: theory and applications},
			},
			book={
				series={Sigma Ser. Pure Math.},
				volume={8},
				publisher={Heldermann, Berlin},
			},
			date={1990},
		}
		\bib{Je54}{article}{
			author={Jennings, S. A.},
			title={Substitution groups of formal power series},
			journal={Canadian J. Math.},
			volume={6},
			date={1954},
			pages={325--340},
		}
		\bib{Jo88}{article}{
			author={Johnson, D. L.},
			title={The group of formal power series under substitution},
			journal={J. Austral. Math. Soc. Ser. A},
			volume={45},
			date={1988},
			number={3},
			pages={296--302},
		}	
		\bib{Ke79}{article}{
			author={Kerdman, F. S.},
			title={Analytic Moufang loops in the large},
			language={Russian},
			journal={Algebra i Logika},
			volume={18},
			date={1979},
			number={5},
			pages={523--555, 632},
		}
		\bib{Ki64}{article}{
			author={Kikkawa, M.},
			title={On local loops in affine manifolds},
			journal={J. Sci. Hiroshima Univ. Ser. A-I Math.},
			volume={28},
			date={1964},
			pages={199--207},
		}
		\bib{KOU84}{article}{
			author={Kirillov, A. A.},
			author={Ovsienko, V. Y.},
			author={Udalova, O. D.},
			title={Identities in the Lie algebra of vector fields on the real line
				[translation of Akad.\ Nauk SSSR Inst.\ Prikl.\ Mat.\ Preprint {\bf
					1984}, no.\ 135]},
			note={Selected translations},
			journal={Selecta Math. Soviet.},
			volume={10},
			date={1991},
			number={1},
			pages={7--17},
		}
		\bib{Kl00}{article}{
			author={Klopsch, B.},
			title={Normal subgroups in substitution groups of formal power series},
			journal={J. Algebra},
			volume={228},
			date={2000},
			number={1},
			pages={91--106},
		}
		\bib{Ku71}{article}{
			author={Kuz\cprime min, E. N.},
			title={The connection between Mal\cprime cev algebras and analytic Moufang
				loops},
			language={Russian},
			journal={Algebra i Logika},
			volume={10},
			date={1971},
			pages={3--22},
		}
		\bib{Ma55}{article}{
			author={Mal\cprime cev, A. I.},
			title={Analytic loops},
			language={Russian},
			journal={Mat. Sb. N.S.},
			volume={36(78)},
			date={1955},
			pages={569--576},
		}
		\bib{MiSa82}{article}{
			author={Mikheev, P. O.},
			author={Sabinin, L. V.},
			title={Analytic Bol loops},
			language={Russian},
			conference={
				title={Webs and quasigroups},
			},
			book={
				publisher={Kalinin. Gos. Univ., Kalinin},
			},
			date={1982},
			pages={102--109, 153},
		}
		
		\bib{MiSa87}{article}{
					author={Mikheev, P. O.},
					author={Sabinin, L. V.},
					title={Infinitesimal theory of local analytic loops},
					language={Russian},
					journal={Dokl. Akad. Nauk SSSR},
					volume={297},
					date={1987},
					number={4},
					pages={801--804},
					translation={
						journal={Soviet Math. Dokl.},
						volume={36},
						date={1988},
						number={3},
						pages={545--548},
					},
		}
		
		\bib{Mo06}{article}{
			author={Mostovoy, J.},
			title={The notion of lower central series for loops},
			conference={
				title={Non-associative algebra and its applications},
			},
			book={
				series={Lect. Notes Pure Appl. Math.},
				volume={246},
				publisher={Chapman \& Hall/CRC, Boca Raton, FL},
			},
			date={2006},
			pages={291--298},
		}
		\bib{Mo08}{article}{
			author={Mostovoy, J.},
			title={Nilpotency and dimension series for loops},
			journal={Comm. Algebra},
			volume={36},
			date={2008},
			number={4},
			pages={1565--1579},
		}
		\bib{MoPe07}{article}{
			author={Mostovoy, J.},
			author={P\'erez-Izquierdo, J. M.},
			title={Dimension filtration on loops},
			journal={Israel J. Math.},
			volume={158},
			date={2007},
			pages={105--118},
		}
		\bib{MoPe10}{article}{
			author={Mostovoy, J.},
			author={P\'erez-Izquierdo, J. M.},
			title={Formal multiplications, bialgebras of distributions and
				nonassociative Lie theory},
			journal={Transform. Groups},
			volume={15},
			date={2010},
			number={3},
			pages={625--653},
		}
		
		\bib{MoPeSh14}{article}{
			author={Mostovoy, J.},
			author={Perez-Izquierdo, J. M.},
			author={Shestakov, I. P.},
			title={Hopf algebras in non-associative Lie theory},
			journal={Bull. Math. Sci.},
			volume={4},
			date={2014},
			number={1},
			pages={129--173},
		}
		\bib{NaSt02}{book}{
			author={Nagy, P. T.},
			author={Strambach, K.},
			title={Loops in group theory and Lie theory},
			series={De Gruyter Expositions in Mathematics},
			volume={35},
			publisher={Walter de Gruyter \& Co., Berlin},
			date={2002},
			pages={xii+361},
		}
		\bib{Pe07}{article}{
			author={P\'erez-Izquierdo, J. M.},
			title={Algebras, hyperalgebras, nonassociative bialgebras and loops},
			journal={Adv. Math.},
			volume={208},
			date={2007},
			number={2},
			pages={834--876},
		}
		\bib{Pf90}{book}{
			author={Pflugfelder, H. O.},
			title={Quasigroups and loops: introduction},
			series={Sigma Series in Pure Mathematics},
			volume={7},
			publisher={Heldermann Verlag, Berlin},
			date={1990},
			pages={viii+147},
		}
		\bib{PoRa16}{article}{
			author={Pogudin, G.},
			author={Razmyslov, Y. P.},
			title={Prime Lie algebras satisfying the standard Lie identity of degree
				5},
			journal={J. Algebra},
			volume={468},
			date={2016},
			pages={182--192},
		}
		\bib{Po17}{article}{
			author = {Poinsot, L.},
			title = {The solution to the embedding problem of a (differential) Lie algebra into its Wronskian envelope},
			journal = {Communications in Algebra},
			year  = {2017},
		}
		\bib{Ra85}{article}{
			author={Razmyslov, Y. P.},
			title={Simple Lie algebras that satisfy the standard Lie identity of
				degree $5$},
			language={Russian},
			journal={Izv. Akad. Nauk SSSR Ser. Mat.},
			volume={49},
			date={1985},
			number={3},
			pages={592--634},
		}
		\bib{Sa99}{book}{
			author={Sabinin, L. V.},
			title={Smooth quasigroups and loops},
			series={Mathematics and its Applications},
			volume={492},
			publisher={Kluwer Academic Publishers, Dordrecht},
			date={1999},
			pages={xvi+249},
		}
		\bib{Sa00}{article}{
			author={Sabinin, L. V.},
			title={Smooth quasigroups and loops: forty-five years of incredible
				growth},
			note={Loops'99 (Prague)},
			journal={Comment. Math. Univ. Carolin.},
			volume={41},
			date={2000},
			number={2},
			pages={377--400},
		}
		\bib{ShU02}{article}{
			author={Shestakov, I. P.},
			author={Umirbaev, U. U.},
			title={Free Akivis algebras, primitive elements, and hyperalgebras},
			journal={J. Algebra},
			volume={250},
			date={2002},
			number={2},
			pages={533--548},
		}
		\bib{Yo90}{thesis}{
				author={York, I. O.},
				title={Group of Formal Power Series},
				type={Ph.D. thesis},
				address={Nottingham University},
				date={1990},
		}
	\end{biblist}
\end{bibdiv}
\end{document}